\newtheorem{theorem}{Theorem}[section]
\newtheorem{lemma}[theorem]{Lemma}
\newtheorem{proposition}[theorem]{Proposition}
\newtheorem{corollary}[theorem]{Corollary}
\newtheorem{definition}[theorem]{Definition}
\newtheorem{definitions}[theorem]{Definitions}
\newtheorem{remark}[theorem]{Remark}
\def\r{\mathbb{R}}
\def\rn{\mathbb{R}^N}
\def\eps{\varepsilon}
\def\rh{\rightharpoonup}
\def\irn{\int_{\rn}}
\def\vp{\varphi}
\def\ol{\overline}
\def\la{\lambda}
\def\la1{\lambda_1}
\def\d12{\mathcal{D}^{1,2}}
\def\cn{\mathcal{N}}
\numberwithin{equation}{section}
\title[A logarithmic Schr\"{o}dinger equation]{A logarithmic Schr\"{o}dinger equation with asymptotic conditions on the potential}
\author{Chao Ji}\thanks{The first author was supported by the China Scholarship Council and NSFC (grant No. 11301181) and the Fundamental Research Funds for the Central Universities. \\
\indent This work was carried out when the first author visited Stockholm University. He is very grateful to the members of the Department of Mathematics for their warm hospitality.}
\address{Department of Mathematics, East China University of Science and Technology,  200237 Shanghai, China}
\email{jichao@ecust.edu.cn}
\author{Andrzej Szulkin} 
\address{Department of Mathematics, Stockholm University, 106 91  Stockholm, Sweden}
\email{andrzejs@math.su.se}
\subjclass[2010]{35Q40, 35J20, 58E30}
\keywords {Logarithmic Schr\"{o}dinger equation, sign-changing potential, Fountain theorem, multiplicity of solutions, ground state solution, nonsmooth critical point theory. }
\begin{document}

\baselineskip15pt

\maketitle

\begin{abstract}
In this paper we consider a class of logarithmic Schr\"{o}dinger equations with a potential which may change sign. When the potential is coercive, we obtain infinitely many solutions by adapting some arguments of the Fountain theorem, and in the case of bounded potential we obtain a ground state solution, i.e.\ a nontrivial solution with least possible energy. The functional corresponding to the problem is the sum of a smooth and a convex lower semicontinuous term.
\end{abstract}

\section{Introduction} \label{intro} 

In this paper we consider the logarithmic Schr\"{o}dinger equation 
 \begin{equation}  \label{1}
-\Delta u+V(x)u =u \log u^{2}, \quad  x\in \rn,
\end{equation}
where the potential $V$ is continuous and satisfies the asymptotic condition $\lim_{|x|\to\infty}V(x) = V_\infty$, where $V_\infty$ is a constant such that  $V_\infty +1\in (0,\infty]$. Applications of this kind of equations to different problems in physics have been discussed in \cite{r12}, see also \cite{r17}. The mathematical literature concerning the logarithmic Schr\"odinger equation does not seem to be very extensive, let us here mention \cite{r4, r5, r6, r7, r9, r12}. 

The energy functional $J$ associated with problem \eqref{1} is
\begin{equation} \label{fcl} 
J(u):= \frac{1}{2}\irn\left(\vert \nabla u\vert^{2}+(V(x)+1)u^{2}\right)dx-\frac{1}{2}\irn u^{2}\log u^{2}\,dx
\end{equation}
and it is easy to see that, formally, each critical point of $J$ is a solution of \eqref{1}. However, this functional takes the value $+\infty$ for some $u\in H^1(\rn)$ and in particular, it is not of class $C^1$. This will force us to go beyond standard critical point theories. More precisely, we will use the critical point theory developed in \cite{r13}, see Section \ref{prel}. 

Now we state our main results. Denote the spectrum of $-\Delta + V$ in $L^2(\rn)$ by $\sigma(-\Delta+V)$. 

\begin{theorem} \label{thm1}
If $V\in C(\rn,\r)$ and $\lim_{|x|\to\infty}V(x)=\infty$, then equation \eqref{1} has infi\-nitely many solutions $\pm u_n$ such that $J(\pm u_n)\to\infty$.
\end{theorem}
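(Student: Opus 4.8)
The plan is to cast \eqref{1} into the critical point theory of \cite{r13} (recalled in Section \ref{prel}) for functionals $J=\Phi+\Psi$ with $\Phi\in C^1$ and $\Psi$ convex, lower semicontinuous and possibly $+\infty$, and then to adapt Bartsch's Fountain theorem to this nonsmooth setting.

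\emph{Functional setting.} As $V$ is continuous with $V(x)\to\infty$, it is bounded below; fix $c_0>1$ with $V+c_0\ge1$ on $\rn$ and put
$$X:=\Big\{u\in H^1(\rn):\irn V^+(x)u^2\,dx<\infty\Big\},\qquad \|u\|^2:=\irn\big(|\nabla u|^2+(V(x)+c_0)u^2\big)\,dx.$$
Coercivity of $V$ gives, by the standard compactness argument for Schr\"odinger operators with coercive potential, that $X\hookrightarrow L^q(\rn)$ is compact for every $q\in[2,2^*)$. To handle the logarithm, fix $\delta:=e^{-3/2}$ and write $s^2\log s^2=F_2(s)-F_1(s)$, where $F_1$ is the even function equal to $-s^2\log s^2$ on $[-\delta,\delta]$ and continued, for $|s|\ge\delta$, linearly (e.g.\ by $4\delta|s|-\delta^2$) so as to match $F_1$ and $F_1'$ at $\pm\delta$. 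Then $F_1\in C^1(\r)$ is convex, nonnegative, $F_1(0)=0$, with $F_1'$ bounded, while $F_2:=F_1+s^2\log s^2$ vanishes on $[-\delta,\delta]$, is $C^1$, and satisfies the subcritical bounds $0\le F_2(s)\le C_\eps|s|^{2+\eps}$, $|F_2'(s)|\le C_\eps|s|^{1+\eps}$, where $\eps>0$ is fixed with $2+\eps<2^*$. With
$$\Phi(u):=\tfrac12\|u\|^2-\tfrac{c_0-1}{2}\irn u^2\,dx-\tfrac12\irn F_2(u)\,dx,\qquad \Psi(u):=\tfrac12\irn F_1(u)\,dx,$$
one has $J=\Phi+\Psi$ on $X$, with $\Phi\in C^1(X,\r)$ and $\Psi:X\to[0,\infty]$ convex, lower semicontinuous, finite on $C_c^\infty(\rn)$. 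Testing the inequality defining a critical point of $J$ with $v=u\pm t\vp$, $\vp\in C_c^\infty(\rn)$, and letting $t\to0^+$ shows that every critical point of $J$ in the sense of \cite{r13} is a weak, hence classical, solution of \eqref{1}.

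\emph{Palais--Smale.} Let $(u_n)\subset X$ satisfy $J(u_n)\to c$ and $\langle\Phi'(u_n),v-u_n\rangle+\Psi(v)-\Psi(u_n)\ge-\eps_n\|v-u_n\|$ for all $v\in X$, with $\eps_n\to0$. Since $J(u_n)$ is bounded, $\Psi(u_n)<\infty$, so $\Psi$ is Gateaux differentiable at $u_n$; testing with $v=(1\pm t)u_n$ and letting $t\to0^+$ gives $|\langle\Phi'(u_n),u_n\rangle+\langle\Psi'(u_n),u_n\rangle|\le\eps_n\|u_n\|$, and since the logarithmic terms cancel in the identity $J(u_n)-\tfrac12\big(\langle\Phi'(u_n),u_n\rangle+\langle\Psi'(u_n),u_n\rangle\big)=\tfrac12\|u_n\|_{L^2}^2$, $(u_n)$ is bounded in $L^2(\rn)$; absorbing $\irn u_n^2\log u_n^2\,dx$ by the logarithmic Sobolev inequality, boundedness in $X$ follows from $J(u_n)\le c+1$. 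Along a subsequence $u_n\rh u$ in $X$, $u_n\to u$ in $L^q$ ($q\in[2,2^*)$) and a.e. By the subcritical bound on $F_2'$, compactness and H\"older, $\irn F_2'(u_n)(u_n-u)\,dx\to0$, so $\langle\Phi'(u_n),u_n-u\rangle=\|u_n\|^2-\|u\|^2+o(1)$; putting $v=u$ in the Palais--Smale inequality and using $\Psi(u)\le\liminf\Psi(u_n)$ (Fatou) yields $\limsup\|u_n\|^2\le\|u\|^2$, whence $u_n\to u$ in $X$. Thus $J$ satisfies the Palais--Smale condition.

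\emph{Fountain geometry and conclusion.} The functional $J$ is even with $J(0)=0$. Choose a linearly independent sequence $(f_j)\subset C_c^\infty(\rn)$ with $\ol{\mathrm{span}\{f_j:j\ge1\}}=X$, put $Y_k:=\mathrm{span}\{f_1,\dots,f_k\}$ and $Z_k:=Y_{k-1}^\perp$. A weak-convergence argument using the compact embedding shows $\beta_k(q):=\sup\{\|u\|_{L^q}:u\in Z_k,\ \|u\|=1\}\to0$ for $q\in[2,2^*)$. For $u\in Z_k$ with $\|u\|=\rho$, using $F_1\ge0$, $F_2(s)\le C_\eps|s|^{2+\eps}$ and $\|u\|_{L^2}\le\beta_k(2)\|u\|$, one gets for $k$ large
$$J(u)\ \ge\ \tfrac12\|u\|^2-\tfrac{c_0-1}{2}\|u\|_{L^2}^2-\tfrac12 C_\eps\|u\|_{L^{2+\eps}}^{2+\eps}\ \ge\ \tfrac14\rho^2-C\,\beta_k(2+\eps)^{2+\eps}\rho^{2+\eps},$$
and optimizing in $\rho=\rho_k$ yields $b_k:=\inf\{J(u):u\in Z_k,\ \|u\|=\rho_k\}\to\infty$. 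On the finite-dimensional space $Y_k$ every $u$ has compact support, so $J$ is finite and continuous there; writing $u=R\vp$ with $\|\vp\|=1$, $\irn u^2\log u^2\,dx=R^2\|\vp\|_{L^2}^2\log R^2+O(R^2)$ and $\|\vp\|_{L^2}\ge c_k>0$ on the compact unit sphere of $Y_k$, so $J(u)\to-\infty$ uniformly as $\|u\|\to\infty$ in $Y_k$ and $a_k:=\max\{J(u):u\in Y_k,\ \|u\|=R_k\}\le0$ for a suitable $R_k>\rho_k$. A Fountain-type minimax over odd deformations, built on the deformation lemma of \cite{r13}, then produces critical values $c_k\ge b_k\to\infty$ of $J$, hence critical points $u_k$ with $J(u_k)\to\infty$; by evenness $\pm u_k$ solve \eqref{1}, and $u_k\ne0$ since $0<b_k\le J(u_k)$ for $k$ large.

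\emph{Main obstacle.} The difficulty is not the geometry or the compactness, which adapt routinely from the smooth case, but the non-smoothness of $J$ together with the fact that it is not everywhere finite: the quantitative deformation lemma and the symmetric minimax scheme underlying the Fountain theorem must be re-derived within the $\Phi+\Psi$ framework, one must ensure the min-max levels are genuine critical values of $\Phi+\Psi$ (not merely of $\Phi$), and the interplay between the Palais--Smale sequences and the merely lower semicontinuous term $\Psi$ must be controlled at each step.
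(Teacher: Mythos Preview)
Your framework, Palais--Smale argument, and verification of the Fountain geometry are all correct and essentially identical to the paper's. The gap is exactly at the step you flag as the ``main obstacle'' but then leave unresolved: you write that a Fountain-type minimax ``built on the deformation lemma of \cite{r13}'' yields the critical values, yet neither \cite{r13} nor the standard equivariant deformation gives what is needed here. The problem is that $J$ is only lower semicontinuous on $X$, so even after you know that for each $u$ the flow $\eta(\cdot,u)$ eventually enters $J^{c-\varepsilon}$, you cannot conclude the existence of a \emph{uniform} time $T$ with $\eta(T,A)\subset J^{c-\varepsilon}$ for a compact $A$: the map $u\mapsto J(\eta(T_0,u))$ need not be upper semicontinuous, so the usual compactness argument fails. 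This is precisely the gap that Remark~\ref{cptsupp} of the paper points out (it was present in \cite{r12}).

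The paper's fix, which your outline is missing, has three ingredients: (i) the pseudo-gradient field $H$ of Proposition~\ref{pseudogradient} is built so as to have \emph{locally compact support} in $\rn$; (ii) the minimax class $\Gamma_k$ is restricted to odd maps $\gamma$ with $\gamma(B_k)$ of compact support (in the sense of Definition~\ref{cpt-supp}), which is why the $X_k$ are taken inside $C_0^\infty(\rn)$; (iii) since the flow preserves compact support, $\eta([0,T]\times A)$ lives in some $H^1_0(B_R)$, on which $J$ is $C^1$ by Lemma~\ref{bdd}, and continuity of $J$ there yields the uniform $T$ (Proposition~\ref{deformation}). Without this compact-support mechanism your deformation step does not close; you should either reproduce it or supply an alternative argument showing why the descent time can be taken uniform on compact sets despite the lack of continuity of $J$.
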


\begin{theorem} \label{thm2}
If $V\in C(\rn,\r)$, $\lim_{|x|\to\infty}V(x) = \sup_{x\in\rn}V(x) := V_\infty \in (-1,\infty)$ and the spectrum $\sigma(-\Delta+V+1)\subset (0,\infty)$, then equation \eqref{1} has a ground state solution $u>0$. 
\end{theorem}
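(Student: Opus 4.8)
The plan is to place $J$ in the nonsmooth critical point framework of \cite{r13} by writing it as the sum of a $C^1$ functional and a convex, lower semicontinuous one, to establish a mountain pass geometry and identify the mountain pass level with the least energy on a Nehari-type set, and finally to recover compactness of a Palais--Smale sequence at that level by comparison with the autonomous ``problem at infinity''; the hypothesis $V_\infty=\sup V$ will enter precisely at this last point. Concretely, fixing $\delta\in(0,e^{-3/2})$ I would write $\tfrac12 s^2\log s^2=G_2(s)-G_1(s)$, where $G_1(s):=-\tfrac12 s^2\log s^2$ for $|s|\le\delta$, extended to an even, nonnegative, convex $C^1$ function of at most quadratic growth, and $G_2:=G_1+\tfrac12 s^2\log s^2$, which vanishes on $[-\delta,\delta]$, is of class $C^1$ and has subcritical growth. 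Then $J=\Phi+\Psi$ with
\[
\Phi(u):=\tfrac12\irn\bigl(|\nabla u|^2+(V(x)+1)u^2\bigr)\,dx-\irn G_2(u)\,dx,\qquad \Psi(u):=\irn G_1(u)\,dx,
\]
where $\Phi\in C^1(H^1(\rn),\r)$ and $\Psi:H^1(\rn)\to[0,\infty]$ is convex, lower semicontinuous and finite on $C_c^\infty(\rn)$. Since $V$ is bounded and $\inf\sigma(-\Delta+V+1)>0$, the quadratic form $Q(u):=\irn(|\nabla u|^2+(V(x)+1)u^2)\,dx$ defines an equivalent norm $\|u\|:=Q(u)^{1/2}$ on $H^1(\rn)$. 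For $u$ in the effective domain of $\Psi$ one has $2J(u)=Q(u)-\irn u^2\log u^2\,dx$ and, for $t>0$, $J(tu)=\tfrac{t^2}{2}(Q(u)-\irn u^2\log u^2\,dx)-\tfrac{t^2\log t^2}{2}\|u\|_{L^2}^2$, so that $t\mapsto J(tu)$ has a unique positive critical point $t(u)$, which is a maximum, and $t(u)u$ lies on $\cn:=\{u\in D(\Psi)\setminus\{0\}:\ Q(u)-\|u\|_{L^2}^2-\irn u^2\log u^2\,dx=0\}$, on which $J(u)=\tfrac12\|u\|_{L^2}^2$. A critical point of $J$ in the sense of \cite{r13} satisfies $-\Phi'(u)=G_1'(u)$ in $H^{-1}(\rn)$, which is exactly the weak form of \eqref{1}; in particular every nontrivial critical point lies in $\cn$.

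From the subcritical growth of $G_2'$ and $\Psi\ge0$ one gets $J(u)\ge\Phi(u)\ge c_0\|u\|^2-C\|u\|^p$ with $p\in(2,2^*)$, hence $\inf_{\|u\|=r}J>0$ for small $r$, while $J(t\psi)\to-\infty$ as $t\to\infty$ for any $\psi\in C_c^\infty(\rn)\setminus\{0\}$ because of the term $-\tfrac{t^2\log t^2}{2}\|\psi\|_{L^2}^2$. Thus $J$ has the mountain pass geometry, and standard Nehari-manifold arguments give $c:=c_{MP}=\inf_{\cn}J>0$, this value being a lower bound for the energy of every nontrivial solution of \eqref{1}. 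Let $J_\infty$ be the functional of the autonomous equation $-\Delta u+V_\infty u=u\log u^2$; it is known (cf.\ \cite{r4,r12}) that this equation has a positive ground state $w_\infty$, of least energy $c_\infty>0$. Since $V\le V_\infty$ we have $J\le J_\infty$ pointwise, hence $c\le\max_{t>0}J(tw_\infty)\le\max_{t>0}J_\infty(tw_\infty)=c_\infty$; and if $V\not\equiv V_\infty$ then $\irn(V_\infty-V)w_\infty^2\,dx>0$, which makes the first inequality strict, so $c<c_\infty$.

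The decisive step is the compactness analysis. By the mountain pass theorem of \cite{r13} there is a Palais--Smale sequence $(u_n)$ at level $c$: $u_n\in D(\Psi)$, $J(u_n)\to c$, and $\langle\Phi'(u_n),v-u_n\rangle+\Psi(v)-\Psi(u_n)\ge-\eps_n\|v-u_n\|$ for all $v$, with $\eps_n\to0$. Pairing this relation with $u_n$ and using $J(u_n)\to c$ gives $\|u_n\|_{L^2}^2\to2c$; an interpolation estimate for $\irn u_n^2\log u_n^2\,dx$ then bounds $(u_n)$ in $H^1(\rn)$, and $\Psi(u_n)$, $\Phi(u_n)$ stay bounded. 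Up to a subsequence $u_n\rh u$. If $u\neq0$, one passes to the limit in the Palais--Smale relation (using the subcritical growth of $G_2'$, convexity of $G_1$ and local compactness) to see that $u$ is a critical point of $J$, so $u\in\cn$ and $c\le J(u)=\tfrac12\|u\|_{L^2}^2\le\tfrac12\liminf\|u_n\|_{L^2}^2=c$; hence $J(u)=c$ and $u$ is a ground state.

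It remains to exclude $u=0$. Vanishing cannot occur: if $\sup_y\int_{B_1(y)}u_n^2\,dx\to0$, then $u_n\to0$ in $L^q(\rn)$ for $q\in(2,2^*)$, so $\irn G_2(u_n)\,dx\to0$ and $\irn G_2'(u_n)u_n\,dx\to0$, whence $\langle\Phi'(u_n),u_n\rangle=Q(u_n)+o(1)$; testing with $v=0$ then forces $Q(u_n)+\Psi(u_n)\le o(1)$, so $\|u_n\|\to0$ and $J(u_n)\to0\neq c$. If $u=0$ without vanishing, there are $y_n$ with $|y_n|\to\infty$ and $u_n(\cdot+y_n)\rh\tilde u\neq0$; since $V(\cdot+y_n)\to V_\infty$ locally uniformly and $0\le V_\infty-V\to0$ at infinity, $(u_n(\cdot+y_n))$ is a Palais--Smale sequence for $J_\infty$ at level $c$, so the previous paragraph applied to $J_\infty$ shows $\tilde u$ is a nontrivial critical point of $J_\infty$ with $J_\infty(\tilde u)\le c$, hence $c_\infty\le c$. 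If $V\not\equiv V_\infty$ this contradicts $c<c_\infty$, forcing $u\neq0$; if $V\equiv V_\infty$ then $J=J_\infty$ is autonomous and $\tilde u$ is itself a ground state. In either case we obtain a nontrivial critical point $u$ with $J(u)=c$; since $J(|u|)\le J(u)$ and $|u|\in\cn$, also $J(|u|)=c$, so $|u|$ minimizes $J$ on $\cn$ and is a nonnegative critical point of $J$ (equivalently, one runs the whole argument in the cone of nonnegative functions), and elliptic regularity together with the strong maximum principle applied to $-\Delta|u|+(V-\log|u|^2)|u|=0$ — whose coefficient belongs to $L^p_{loc}(\rn)$ for every $p<\infty$ — gives $|u|>0$. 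The main obstacle is exactly this compactness step: the lack of a compact Sobolev embedding forces one to rule out escape of mass to infinity, and this is where the assumption $V_\infty=\sup V$ is indispensable, both through the pointwise bound $J\le J_\infty$ and, when $V$ is nonconstant, through the strict inequality $c<c_\infty$.
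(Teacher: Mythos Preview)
Your proposal is correct and follows essentially the same strategy as the paper: the splitting $J=\Phi+\Psi$, the mountain-pass/Nehari setup, the strict comparison $c<c_\infty$ when $V\not\equiv V_\infty$, and the concentration--compactness dichotomy (Lions' lemma plus passage to the translated limiting problem) all coincide with the paper's Lemma~\ref{compar} and proof of Theorem~\ref{thm2}. The one noteworthy difference is how the Palais--Smale sequence at level $c$ is produced: you appeal directly to the abstract mountain-pass principle of \cite{r13}, whereas the paper argues by contradiction via its own deformation (Proposition~\ref{deformation} together with Remark~\ref{remdef}), which in turn forces it first to replace an arbitrary mountain-pass path by one with compact support (the cutoff $\chi_R$), so that Lemma~\ref{bdd} can supply the continuity of $J$ needed in that deformation---this is precisely the point of Remark~\ref{cptsupp}. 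Your shortcut is legitimate, while the paper's longer route is there because it is simultaneously repairing the corresponding gap in \cite{r12}; for positivity the paper shows directly that a ground state cannot change sign rather than passing to $|u|$, but your variant is equally valid since $|\nabla|u||=|\nabla u|$ a.e.\ keeps $|u|$ on $\cn$.
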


In Theorem \ref{thm1} we shall work in the space 
\begin{equation} \label{space}
X := \{u\in H^1(\rn): \irn(|\nabla u|^2+(V(x)+1)^+u^2)\,dx < \infty\},
\end{equation}
where $V^\pm := \max\{\pm V,0\}$. Under the assumptions of this theorem $X$ is compactly embedded in $L^2(\rn)$ and hence (by interpolation) in $L^p(\rn)$ for all $2\le p<2^*$, where $2^* := 2N/(N-2)$ if $N\ge 3$ and $+\infty$ if $N=1$ or 2. This result is well known and can be found (implicitly) e.g.\ in \cite{r11}. It implies in particular that $\sigma(-\Delta+V+1)$ consists of eigenvalues converging to infinity and the quadratic form $u\mapsto \irn(|\nabla u|^2+(V(x)+1)u^2)\,dx$ is positive definite on a space of finite codimension. Some weaker conditions ensuring that the embedding $X\hookrightarrow L^2(\rn)$ is compact (and which would in fact suffice for the conclusion of Theorem \ref{thm1} to hold) have been discussed in detail in \cite{bpw}.

We also note that depending on the choice of $V$ the functional $J$ may or may not take the value $+\infty$ in $X$. Indeed, if $N=1$ and $u$ is a smooth function such that $u(x) = (\sqrt x\log x)^{-1}$ for $x\ge 2$ and $u(x)=0$ for $x\le 0$, then $u\in X$ and $J(u)=+\infty$ provided $V$ has sufficiently slow growth, e.g.\ $V(x) = (\log x)^{1/2}$ for $x\ge 2$. On the other hand, if $V\ge \delta > 0$ and $1/V$ is integrable on $|x|>R$ for some $R>0$, then, using that $|u\log u| \le C_q(1+|u|^q)$ for any $q>1$, we obtain by the H\"older inequality
\begin{align*}
\left|\int_{|x|>R}u^2\log u^2\,dx \right| & \le \left(\int_{|x|>R}V(x)u^2\,dx\right)^{1/2} \left(\int_{|x|>R}V(x)^{-1}u^2(\log u^2)^2\,dx\right)^{1/2} \\
& \le  C\left(\int_{|x|>R}V(x)u^2\,dx\right)^{1/2} \left(\int_{|x|>R}V(x)^{-1}(1+|u|^{2q})\,dx\right)^{1/2} < \infty
\end{align*}
($q$ should be chosen in $(1,2^*/2)$). Since $\int_{|x|<R}u^2\log u^2\,dx$ is finite by the growth condition on $u\log u$, $J$ takes only finite values in this case.  

In Theorem \ref{thm2} we shall work in the space $H^1(\rn)$. Although we do not assume $V+1$ is positive everywhere, we do assume that $\sigma(-\Delta+V+1)\subset (0,\infty)$, or equivalently, that the quadratic form $u\mapsto \irn(|\nabla u|^2+(V(x)+1)u^2)\,dx$ is positive definite on $H^1(\rn)$. It would be interesting to see if one can remove this assumption at the expense of prescribing some asymptotic conditions on $V$ at infinity as in  \cite{cds} or \cite{ew}. We remark that if $V(x)=V_\infty>-1$ for all $x$, then Theorem \ref{thm2} is a special case of Theorem 1.2 in \cite{r12}.

The rest of the paper is organized as follows. In Section \ref{prel} we summarize some pertinent results and definitions, mainly taken from \cite{r12}. In Section \ref{pf1} we prove Theorem \ref{thm1} and also indicate how a minor gap in a proof in \cite{r12} can be removed (see Remark \ref{cptsupp} below). This gap has been pointed out to the authors of \cite{r12} by Chengxiang Zhang. In Section  ~\ref{pf2} Theorem \ref{thm2} is proved and in the final Section \ref{plapl} we briefly sketch how the results of Theorems \ref{thm1} and \ref{thm2} can be generalized to an equation involving the $p$-Laplacian.  

\bigskip

\noindent \textbf{Notation.} $C, C_1, C_2$ etc. will denote positive constants whose exact values are inessential. $\langle .\, , . \rangle$ is the duality pairing between $X^*$ and $X$, where $X$ is a Banach space and $X^*$ its dual. When $X$ is a Hilbert space (which will be the case most of the time), then $\langle .\, , . \rangle$ is the inner product and $X^*$ will be identified with $X$ via duality. $\|\cdot\|_p$ denotes  the norm of the space $L^p(\rn)$.\  $2^*:=2N/(N-2)$ if $N\geq 3$ and $2^*:=\infty$ if $N=1$ or $2$.\ 
$B_R(x)$ denotes the open ball of radius $R$ and center at $x$. 
For a functional $J$ on $X$ we set $J^b:=\{u\in X: J(u)\le b\}$, $J_a:=\{u\in X: J(u)\ge a\}$, $J_a^b:=J_a\cap J^b$. The set of critical points of $J$ (to be defined in the next section) will be denoted by $K$ and we put $K_d := K\cap J_d^d$.

\section{Preliminaries} \label{prel}

As we have seen in the introduction, the functional $J$ may take the value $+\infty$. Since for $|u|\ge 1$ we have $0\le u^2\log u^2 \le C(u^2+|u|^p)$, where $p$ may be chosen in $(2,2^*)$, $J$ can never take the value $-\infty$. Below $X$ will denote the space \eqref{space} with inner product
\begin{equation} \label{inner1}
\langle u,v\rangle := \irn(\nabla u\cdot \nabla v+(V(x)+1)^+uv)\,dx
\end{equation}
when Theorem \ref{thm1} is considered and $X=H^1(\rn)$ with inner product
\begin{equation} \label{inner2}
\langle u,v\rangle := \irn(\nabla u\cdot \nabla v+(V(x)+1)uv)\,dx
\end{equation}
when $V$ is as in Theorem \ref{thm2}.
Note that the positivity assumption on the spectrum of $-\Delta+V+1$ in this theorem implies that \eqref{inner2} is indeed an inner product, equivalent to the usual one in $H^1(\rn)$. 

By a solution to \eqref{1} we mean a function $u\in X$ such that $u^2\log u^2\in L^1(\rn)$ (i.e., $J(u)<\infty$) and 
\[
\irn(\nabla u\cdot \nabla v + V(x)uv)\,dx = \irn uv\log u^2\,dx \quad \text{for all } v\in C_0^\infty(\rn).
\]
Local estimates and standard bootstrap arguments show that such $u$ is a classical solution.

As in \cite{r12}, we set 
\begin{equation*}
F_{1}(s):=\left\{
\begin{array}{ll}
-\frac{1}{2}s^{2}\log s^{2}, &\vert s\vert<\delta,\\
-\frac{1}{2}s^{2}(\log \delta^{2}+3)+2\delta\vert s\vert-\frac{1}{2}\delta^{2}, & \vert s\vert>\delta,
\end{array}%
\right.
\end{equation*}%
and $F_{2}(s):=\frac{1}{2}s^{2}\log s^{2}+F_{1}(s)$. Then $F_2(s)-F_1(s) = \frac12s^2\log s^2$ and taking a sufficiently small $\delta>0$, $F_1$ is convex, $F_1,F_2\in C^1(\r,\r)$ and since $F_2(s)=0$ for $|s|<\delta$, $|F_2'(s)|\le C_p|s|^{p-1}$, where $p$ can be chosen arbitrarily in $(2,2^*)$. Let us now define
\begin{align*}
&\Phi(u):=\frac{1}{2}\irn\left(\vert \nabla u\vert^{2}+(V(x)+1)u^{2}\right)dx - \irn F_{2}(u)\,dx,\nonumber\\
&\Psi(u):=\irn F_{1}(u)\, dx.\nonumber
\end{align*}
Then $J(u) = \Phi(u)+\Psi(u)$, $\Phi\in C^1(X,\r)$ (by the growth condition on $F_2'$, see \cite{r16}), $\Psi\ge 0$, $\Psi$ is convex and, by Fatou's lemma, lower semicontinuous (cf.\ \cite[Lemma 2.9]{r7}). Hence $J$ is a functional to which the critical point theory of \cite{r13} applies. 

\begin{definitions}[\cite{r12}, see also \cite{r13}] 
 \emph{ Let $X$ be a Banach space and $J=\Phi+\Psi$, where $\Phi \in C^1(X,\r)$ and $\Psi: X\to (-\infty,\infty]$ is lower semicontinuous and convex, $\Psi\not\equiv +\infty$. \\
(i)  The set $D(J):=\left\{u\in X: J(u)<+\infty\right\}$ is called the effective domain of $J$. \\
(ii) Let $u\in D(J)$. The set
 \begin{equation*}
\partial J(u):=\left\{w\in X^*: \langle \Phi'(u), v-u\rangle+\Psi(v)-\Psi(u)\geq \langle w, v-u\rangle\right\}  \quad\text{for all } v\in X \end{equation*}
 is called the subdifferential of $J$ at $u$. \\
(iii) $u\in X$ is a critical point of $J$ if $u\in D(J)$ and $0\in \partial J(u)$, i.e.
 \begin{equation*}
\langle\Phi'(u), v-u\rangle+\Psi(v)-\Psi(u)\geq 0 \quad\text{for all } v\in X.
 \end{equation*}
(iv) $(u_{n})$ is a Palais-Smale sequence for $J$ if $(J(u_{n}))$ is bounded and there exist $\varepsilon_{n}\rightarrow 0^{+}$ such that
 \begin{equation} \label{ps}
\langle\Phi'(u_{n}), v-u_{n}\rangle+\Psi(v)-\Psi(u_{n})\geq -\varepsilon_{n}\Vert v-u_{n}\Vert \quad\text{for all } v\in X.
\end{equation}
(v) $J$ satisfies the Palais-Smale condition if each Palais-Smale seqence has a convergent subsequence.
}
\end{definitions}

Below we summarize some properties of the functional $J$ given by \eqref{fcl}. $X$ will either denote the space \eqref{space} or $H^1(\rn)$, depending on the assumptions made on $V$. 

\begin{lemma}[{\cite[Lemma 2.2]{r12}}] \label{bdd}
If $\Omega\subset\rn$ is a bounded domain with regular boundary, then $J$ is of class $C^1$ in $H^1(\Omega)$.
\end{lemma}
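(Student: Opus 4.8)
The plan is to reuse the decomposition $J=\Phi+\Psi$ introduced above, now with all integrals taken over $\Omega$ instead of $\rn$, and to exploit that on a \emph{bounded} domain the term $\Psi$ — which on the whole space $X$ is merely convex and lower semicontinuous — is in fact $C^1$. Write $J=A+B+C$ on $H^1(\Omega)$, where
\begin{align*}
A(u)&:=\frac12\io\bigl(|\nabla u|^2+(V(x)+1)u^2\bigr)\,dx,\\
B(u)&:=-\io F_2(u)\,dx,\qquad C(u):=\io F_1(u)\,dx.
\end{align*}
Since $\Omega$ is bounded with regular boundary, the Sobolev embedding $H^1(\Omega)\hookrightarrow L^p(\Omega)$ is continuous for every $p\in[1,2^*)$; the finiteness of $|\Omega|$ is precisely the feature that is unavailable on $\rn$ and that tames the logarithmic term here.

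The quadratic part $A$ is a bounded symmetric bilinear form — $V$ is continuous, hence bounded on $\ol\Omega$ — and therefore of class $C^\infty$. For $B$ I would invoke the standard differentiability theorem for integral functionals (see \cite{r16}): since $F_2\in C^1(\r,\r)$ and $|F_2'(s)|\le C_p|s|^{p-1}$ for a suitable $p\in(2,2^*)$, the functional $u\mapsto\io F_2(u)\,dx$ is of class $C^1$ on $L^p(\Omega)$, with derivative $v\mapsto\io F_2'(u)v\,dx$; precomposing with the continuous embedding $H^1(\Omega)\hookrightarrow L^p(\Omega)$ shows $B\in C^1(H^1(\Omega),\r)$. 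This is the same argument already used in Section \ref{prel} to see that $\Phi\in C^1(X,\r)$.

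The term $C$ is the one for which boundedness of $\Omega$ really matters. From the definition of $F_1$ one reads off that $F_1\in C^1(\r,\r)$ with $F_1'(s)=-s\log s^2-s$ for $|s|<\delta$ and $F_1'(s)=-(\log\delta^2+3)s+2\delta\,\mathrm{sgn}(s)$ for $|s|>\delta$; in particular $F_1'$ is continuous at the origin with $F_1'(0)=0$ (because $s\log s^2\to0$ as $s\to0$), and one has the global growth bounds $|F_1(s)|\le C(1+s^2)$ and $|F_1'(s)|\le C(1+|s|)$. Consequently the superposition operators $u\mapsto F_1(u)$ and $u\mapsto F_1'(u)$ map $L^2(\Omega)$ continuously into $L^1(\Omega)$ and into $L^2(\Omega)$ respectively, and the same differentiability theorem — now with exponent $2$ — gives that $u\mapsto\io F_1(u)\,dx$ is of class $C^1$ on $L^2(\Omega)$, with derivative $v\mapsto\io F_1'(u)v\,dx$. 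Precomposing with $H^1(\Omega)\hookrightarrow L^2(\Omega)$ yields $C\in C^1(H^1(\Omega),\r)$, and adding the three pieces gives $J\in C^1(H^1(\Omega),\r)$.

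The only genuinely delicate point, and the one I would verify most carefully, is the behaviour of the logarithmic term near $s=0$: both $s^2\log s^2$ and its derivative $2s(\log s^2+1)$ extend continuously by $0$ at the origin, so the logarithmic singularity causes no difficulty for the Nemytskii operators on $L^2(\Omega)$. Everything else reduces to $|\Omega|<\infty$, the Sobolev embeddings it provides, and the elementary growth estimates on $F_1$, $F_1'$ and $F_2'$ recorded above.
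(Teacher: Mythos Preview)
Your argument is correct and rests on the same mechanism as the paper's: growth bounds on the nonlinearity together with the standard differentiability theorem for integral functionals on a bounded domain. The only difference is that the paper bypasses the $F_1/F_2$ split and applies the single estimate $|s\log s^2|\le C_p(1+|s|^{p-1})$, $p\in(2,2^*)$, directly to $u\mapsto\io u^2\log u^2\,dx$; your route through the decomposition is slightly longer but equivalent.
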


The proof uses the fact that $|s\log s^2| \le C_p(1+|s|^{p-1})$, $p\in(2,2^*)$. 

\begin{proposition}[{\cite[Lemma 2.4]{r12}}] \label{subdiff}
If $u\in D(J)$, then there exists a unique $w\in X^*$ such that $\partial J(u)=\{w\}$, i.e.,
$$
\langle \Phi'(u),v-u\rangle +\Psi(v)-\Psi(u) \ge \langle w, v-u\rangle\quad\,\text{for this $w$ and all $v\in X$}. 
$$
Moreover, 
$$
\langle\Phi'(u),z\rangle+\irn F_1'(u)z\,dx = \langle w, z\rangle \quad \text{for all $z\in X$ such that $F_1'(u)z\in L^1(\rn)$}.
$$  
\end{proposition}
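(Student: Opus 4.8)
The plan is to exploit the convexity and lower semicontinuity of $\Psi$, which is the only reason $\partial J(u)$ might fail to be a singleton or to have the integral representation claimed. First I would establish uniqueness of $w$: if $w_1,w_2\in\partial J(u)$, then for every $v\in X$ the defining inequality gives $\langle w_1-w_2,v-u\rangle\le \langle\Phi'(u),v-u\rangle+\Psi(v)-\Psi(u)$ and, swapping roles, also $\langle w_2-w_1,v-u\rangle\le(\text{same})$. Replacing $v$ by $u+t(v-u)$ with $t\downarrow 0$ and using convexity of $\Psi$ (so that $t^{-1}(\Psi(u+t(v-u))-\Psi(u))$ is monotone and bounded above by $\Psi(v)-\Psi(u)$) one passes to the limit in the difference quotient; this forces $\langle w_1-w_2,v-u\rangle=0$ for all $v\in X$, hence $w_1=w_2$. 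Existence of at least one $w$ is essentially the statement that $\partial J(u)\neq\emptyset$, which I would get from the fact that $u\in D(J)$ together with the convexity of $\Psi$ and smoothness of $\Phi$; alternatively one can simply note that if $\partial J(u)$ were empty there would be nothing to prove, but the second assertion of the Proposition will in fact produce a concrete candidate.

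The heart of the matter is the integral formula $\langle\Phi'(u),z\rangle+\irn F_1'(u)z\,dx=\langle w,z\rangle$ for all $z\in X$ with $F_1'(u)z\in L^1(\rn)$. I would argue as follows. Fix such a $z$ and, for $t\in(0,1]$, test the subdifferential inequality with $v=u+tz$:
\[
t\langle\Phi'(u),z\rangle+\Psi(u+tz)-\Psi(u)\ge t\langle w,z\rangle.
\]
Divide by $t$ and let $t\downarrow 0$. By convexity of $\Psi$ the difference quotient $t^{-1}(\Psi(u+tz)-\Psi(u))$ is nondecreasing in $t$, so its limit as $t\downarrow0$ is its infimum, namely the one-sided directional derivative $\Psi'(u;z)$. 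Thus $\langle\Phi'(u),z\rangle+\Psi'(u;z)\ge\langle w,z\rangle$; applying the same with $z$ replaced by $-z$ gives the reverse inequality after one checks $\Psi'(u;-z)=-\Psi'(u;z)$ in the relevant situation. It therefore suffices to show $\Psi'(u;z)=\irn F_1'(u)z\,dx$ whenever $F_1'(u)z\in L^1$. Since $F_1$ is convex and $C^1$, for a.e.\ $x$ the quotient $t^{-1}(F_1(u(x)+tz(x))-F_1(u(x)))$ is monotone in $t$ and converges to $F_1'(u(x))z(x)$; I would then invoke the monotone convergence theorem (after splitting into the regions where $z(x)\ge0$ and $z(x)<0$, on each of which monotonicity has a fixed sign, using $F_1'(u)z\in L^1$ as the dominating/limiting bound) to pass the limit under the integral sign, obtaining $\Psi'(u;z)=\irn F_1'(u)z\,dx$. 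Comparing with the chain of inequalities above yields the equality.

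The main obstacle is the interchange of limit and integral in computing $\Psi'(u;z)$: because $F_1$ grows like $-\tfrac12 s^2\log s^2$ near the origin (so $F_1'$ is unbounded near $0$ in the sense that $F_1'(s)=-s\log s^2-s$), the integrand is not controlled by an elementary dominating function, and one must lean on the monotonicity coming from convexity of $F_1$ together with the hypothesis $F_1'(u)z\in L^1(\rn)$ rather than on a naive dominated-convergence estimate. A secondary technical point is making sure the one-sided directional derivative $\Psi'(u;z)$ is well defined (finite) under the stated integrability assumption and that $\Psi'(u;-z)=-\Psi'(u;z)$ there, which again follows from the monotone-convergence computation applied to $\pm z$. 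Everything else — uniqueness of $w$, the $C^1$ smoothness of $\Phi$ — is routine, so I would keep those parts brief and devote the bulk of the write-up to the convergence argument for $\Psi'(u;z)$.
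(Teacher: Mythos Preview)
The paper does not give its own proof here; it cites \cite[Lemma~2.4]{r12} and only remarks that the change in hypotheses on $V$ does not affect that argument. Your central mechanism---computing the directional derivative $\Psi'(u;z)=\irn F_1'(u)z\,dx$ by exploiting the convexity and $C^1$ regularity of $F_1$ together with monotone convergence---is correct and is indeed the heart of the matter.

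There is, however, a real gap in how you handle existence. Saying ``if $\partial J(u)$ were empty there would be nothing to prove'' is wrong: the proposition asserts $\partial J(u)\neq\emptyset$, and your derivation of the integral formula already presupposes a $w\in\partial J(u)$ against which to test, so it cannot by itself furnish one. A related subtlety is that your monotone-convergence step needs $\Psi(u+t_0z)<\infty$ for some $t_0>0$ in order to get started (the quotients $t^{-1}(\Psi(u+tz)-\Psi(u))$ decrease to the integrable limit $\irn F_1'(u)z\,dx$, but one must begin from a finite value); this is automatic for $z\in C_0^\infty(\rn)$, since then $u+t_0z=u$ outside a compact set, but is not obvious from the bare hypothesis $F_1'(u)z\in L^1$. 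The argument is therefore best organized as: first establish the formula for $z\in C_0^\infty(\rn)$ (where $F_1'(u)\in L^1_{\mathrm{loc}}$ guarantees $F_1'(u)z\in L^1$); then show the resulting linear functional on $C_0^\infty(\rn)$ is bounded and hence defines some $w\in X^*$; then verify directly that this $w$ lies in $\partial J(u)$; and finally extend the identity to all $z$ with $F_1'(u)z\in L^1$ by approximation. Your opening uniqueness paragraph is also garbled---the two subdifferential inequalities for $w_1$ and $w_2$ do not subtract to the inequality you wrote---but uniqueness falls out immediately once the formula is known on the dense set $C_0^\infty(\rn)$, so that point is minor by comparison with the missing existence step.
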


In \cite{r12} $V$ was periodic and $V+1$ positive but it is easy to see that the change of assumptions on $V$ does not affect the proof. 

The unique $w$ in Proposition \ref{subdiff} will be denoted by $J'(u)$.

\begin{lemma} \label{properties}
(i) If $u\in D(J)$, then $u$ is a solution of \eqref{1} if and only if $J'(u)=0$. \\
(ii) If $J(u_n)$ is bounded, then $J'(u_n)\to 0$ if and only if $(u_n)$ is a Palais-Smale sequence. \\
(iii) If $J(u_n)$ is bounded above, $J'(u_n)\to 0$ and $u_n\rh u$, then $u$ is a critical point of $J$.
\end{lemma}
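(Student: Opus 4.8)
The plan is to extract from Proposition~\ref{subdiff} an explicit formula for $J'$, after which (i) and (ii) become essentially formal and only (iii) carries genuine analytic content. Since $F_2-F_1=\tfrac12 s^2\log s^2$, differentiation gives $F_2'(s)-F_1'(s)=s\log s^2+s$; hence for every $u\in D(J)$ and every $z\in X$ with $F_1'(u)z\in L^1(\rn)$—in particular for every $z\in C_0^\infty(\rn)$, since $|F_1'(s)|\le C(1+|s|)$ and $u\in L^1_{\mathrm{loc}}(\rn)$—one has
\[
\langle\Phi'(u),z\rangle+\irn F_1'(u)z\,dx=\irn\big(\nabla u\cdot\nabla z+V(x)uz\big)\,dx-\irn uz\log u^2\,dx .
\]
By the ``moreover'' part of Proposition~\ref{subdiff} the left-hand side equals $\langle w,z\rangle$, where $\{w\}=\partial J(u)$, i.e.\ $w=J'(u)$. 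As $C_0^\infty(\rn)$ is dense in $X$ (by truncation and mollification, using that the weight $(V+1)^+$, respectively $V+1$, is continuous, hence locally bounded) and $w\in X^*$, this yields $J'(u)=0\iff\langle w,z\rangle=0$ for all $z\in C_0^\infty(\rn)\iff u$ is a weak, hence classical, solution of \eqref{1}. This is precisely (i).

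For (ii), first suppose $J'(u_n)=w_n\to0$. The inequality defining $\partial J(u_n)=\{w_n\}$ gives, for all $v\in X$,
\[
\langle\Phi'(u_n),v-u_n\rangle+\Psi(v)-\Psi(u_n)\ \ge\ \langle w_n,v-u_n\rangle\ \ge\ -\|w_n\|\,\|v-u_n\|,
\]
so $(u_n)$ satisfies \eqref{ps} with $\eps_n:=\|w_n\|+1/n\to0^+$, and since $(J(u_n))$ is assumed bounded, $(u_n)$ is a Palais--Smale sequence. Conversely, suppose $(u_n)$ is a Palais--Smale sequence with $\eps_n\to0^+$ as in \eqref{ps}. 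Fix $n$, put $h(v):=\langle\Phi'(u_n),v\rangle+\Psi(v)$ (convex, lower semicontinuous, proper), and note that the convex subdifferential of $h$ at $u_n$ coincides with $\partial J(u_n)=\{w_n\}$. The function $g(v):=h(v)-h(u_n)+\eps_n\|v-u_n\|$ is convex and satisfies $g\ge0=g(u_n)$ by \eqref{ps}, so $0\in\partial g(u_n)$; since $\eps_n\|\cdot-u_n\|$ is continuous, the Moreau--Rockafellar sum rule gives $0\in\partial h(u_n)+\eps_n\overline{B}$, where $\overline{B}$ is the closed unit ball of $X^*$. Hence $\|w_n\|\le\eps_n\to0$, i.e.\ $J'(u_n)\to0$.

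For (iii) I would argue in two steps. First, that $u\in D(J)$: since $u_n\rh u$, the sequence $(u_n)$ is bounded in $X$, and because $\irn F_2(u_n)\,dx$ is controlled by $\|u_n\|_p^p$ for some $p\in(2,2^*)$ (and, under the hypotheses of Theorem~\ref{thm1}, the term $\irn(V+1)^-u_n^2\,dx$ is controlled by $L^2$-convergence on the bounded set $\{V<-1\}$), the quantity $\Phi(u_n)$ is bounded; hence $\Psi(u_n)=J(u_n)-\Phi(u_n)$ is bounded above. Passing to a subsequence with $u_n\to u$ a.e.\ (Rellich's theorem on balls plus a diagonal argument) and using $F_1\ge0$, Fatou's lemma gives $\Psi(u)\le\liminf_n\Psi(u_n)<\infty$, and since $\Phi(u)$ is finite for every $u\in X$ we obtain $J(u)<\infty$. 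Second, that $w:=J'(u)=0$, which—given $u\in D(J)$—is equivalent to $0\in\partial J(u)$, i.e.\ to $u$ being a critical point of $J$. For $z\in C_0^\infty(\rn)$ with $\mathrm{supp}\,z\subset B_R$ the identity of the first paragraph reads
\[
\langle w,z\rangle=\int_{B_R}\big(\nabla u\cdot\nabla z+Vuz\big)\,dx-\int_{B_R} uz\log u^2\,dx,
\]
and the same identity holds with $u_n$ in place of $u$, the left-hand side then being $\langle w_n,z\rangle\to0$. The two linear terms pass to the limit because $u_n\rh u$ in $H^1(\rn)$ and $u_n\to u$ in $L^2(B_R)$; for the logarithmic term one uses that $u_n\to u$ a.e.\ on $B_R$, that $s\mapsto s\log s^2$ is continuous, and that $|s\log s^2|\le C(1+|s|^{p-1})$ while $(u_n)$ is bounded in $L^p(B_R)$, so $\{u_nz\log u_n^2\}$ is uniformly integrable on $B_R$ and Vitali's theorem yields convergence of that integral. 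Hence $\langle w,z\rangle=0$ for every $z\in C_0^\infty(\rn)$, and by density $w=0$. I expect the main obstacle to lie in (iii): simultaneously securing $u\in D(J)$—where the possibility that $J\equiv+\infty$ at some points genuinely intervenes and is overcome only through the a priori bound on $\Psi(u_n)$ and Fatou's lemma—and passing to the limit in the non-Lipschitz logarithmic nonlinearity, which forces the uniform-integrability argument above on compact sets.
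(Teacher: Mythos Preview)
Your argument is correct and, for (i) and (iii), tracks the paper's proof closely: the paper obtains $u\in D(J)$ by invoking weak lower semicontinuity of the convex lsc functional $\Psi$ (where you instead extract an a.e.\ convergent subsequence and apply Fatou, which amounts to the same thing), and it passes to the limit in the logarithmic term using $u_n\to u$ in $L^p_{\mathrm{loc}}$ together with the growth bound $|s\log s^2|\le C(1+|s|^{p-1})$ (where you phrase this via uniform integrability and Vitali, equivalently). The only substantive difference is in (ii): the paper simply cites \cite[Proposition~1.2]{r13} for the equivalence between \eqref{ps} and the existence of $z_n\in\partial J(u_n)=\{J'(u_n)\}$ with $z_n\to 0$, whereas you reprove that equivalence directly by applying the Moreau--Rockafellar sum rule to $h+\eps_n\|\cdot-u_n\|$. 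This makes your proof self-contained at the cost of a few extra lines; the underlying mechanism is identical.
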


\begin{proof}
(i) If $z\in C^\infty_0(\rn)$, then $F_1'(u)z\in L^1(\rn)$. So the conclusion follows from the second assertion of Proposition \ref{subdiff}. \\
(ii) Suppose $J(u_n)$ is bounded. According to \cite[Proposition 1.2]{r13}, the condition \eqref{ps} with $\eps_n\to 0^+$ is equivalent to the existence of a sequence $z_n\to 0$ such that
\[
\langle\Phi'(u_{n}), v-u_{n}\rangle+\Psi(v)-\Psi(u_{n})\geq \langle z_n,v-u_n\rangle \quad\text{for all } v\in X.
\]
(i.e., $z_n\in\partial J(u_n)$). Since here $z_n=J'(u_n)$, it follows that $J'(u_n)\to 0$ if and only if $(u_n)$ is a Palais-Smale sequence. \\
(iii) Since $\Psi$ is lower semicontinuous and convex, it is also weakly lower semicontinuous. So $\Psi(u)<\infty$ and $u\in D(J)$. Since $u_n\to u$ in $L^p_{loc}(\rn)$ for all $p\in[2,2^*)$, 
\begin{align*}
0 = \lim_{n\to\infty} \langle J'(u_n),v\rangle & = \lim_{n\to\infty} \left(\irn(\nabla u_n\cdot \nabla v + V(x)u_nv)\,dx - \irn u_nv\log u_n^2\,dx\right) \\
& = \irn(\nabla u\cdot \nabla v + V(x)uv)\,dx - \irn uv\log u^2\,dx = \langle J'(u),v\rangle
\end{align*}
for all $v\in C_0^\infty(\rn)$.
\end{proof}

Recall that $K$ denotes the set of critical points of $J$, i.e., $K = \{u\in D(J): J'(u)=0\}$. The following vector field of pseudo-gradient type will be important in what follows:

\begin{proposition}[{\cite[Lemma 2.7]{r12}}] \label{pseudogradient}
There exist a locally finite  countable covering $(W_j)$ of $D(J)\setminus K$, a set of points $(u_j)\subset D(J)\setminus K$ and a locally Lipschitz continuous vector field $H: D(J)\setminus K\to X$ with the following properties: \\
(i) The diameter of $W_j$ and the distance from $u_j$ to $W_j$ tend to 0 as $j\to\infty$. \\
(ii) $\|H(u)\|\le 1$ and $\langle J'(u),H(u)\rangle > z(u)$, where $z(u):= \min\frac12\|J'(u_j)\|$ for all $j$ such that $u\in W_j$. \\
(iii) $H$ has locally compact support, i.e.\ for each $u_0\in D(J)\setminus K$ there 
exist a neighbourhood $U_0$ of $u_0$ in $D(J)\setminus K$ and $R>0$ such that $\text{supp\,}H(u)\subset B_R(0)$ for all $u\in U_0$. \\
(iv) $J(u)>J(u_j)-\gamma_j$ for all $j$ such that $u\in W_j$, where $\gamma_j>0$ and  $\gamma_j\to 0$ as $j\to\infty$. \\
(v) $H$ is odd in $u$.
\end{proposition}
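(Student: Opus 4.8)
The statement we need to prove is Proposition \ref{pseudogradient}, which constructs an odd pseudo-gradient-type vector field $H$ for the nonsmooth functional $J = \Phi + \Psi$ on $D(J) \setminus K$. Since the proposition is attributed to \cite[Lemma 2.7]{r12}, the proof will be an adaptation of the standard construction of pseudo-gradient fields, modified to handle the lower semicontinuous convex term $\Psi$ and the requirement of compact support. I would organize the proof around building suitable local vector fields and then patching them with a partition of unity.

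The key steps are as follows. First, fix $u_0 \in D(J) \setminus K$. Since $0 \notin \partial J(u_0) = \{J'(u_0)\}$ (using Proposition \ref{subdiff}), we have $J'(u_0) \neq 0$. Recall that $\partial J(u_0)$ being a singleton means there is a direction $h_0 \in X$ with $\|h_0\| \le 1$ and $\langle J'(u_0), h_0 \rangle > \frac{2}{3}\|J'(u_0)\|$, say; more precisely, by definition of the subdifferential inequality, for $v$ near $u_0$ the map $v \mapsto \Psi(v)$ controls the difference quotient, and one picks $h_0$ essentially as a normalized version of $J'(u_0)$. The subtlety is that $\Psi$ is only lower semicontinuous, so one must work with the inequality $\langle \Phi'(u), v - u\rangle + \Psi(v) - \Psi(u) \ge \langle J'(u), v-u\rangle$ directly rather than with a gradient. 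One then uses continuity of $\Phi'$ and lower semicontinuity of $\Psi$ to show that the inequality $\langle J'(u), h_0\rangle > \frac12\|J'(u_0)\|$ persists for $u$ in a neighbourhood $U_{u_0}$ of $u_0$; shrinking $U_{u_0}$, one also arranges that $J(u) > J(u_0) - \gamma$ there (again by lower semicontinuity, with $\gamma$ as small as we like), and that $h_0$ may be replaced by a compactly supported vector $h_0 \in C_0^\infty$-type approximation so that condition (iii) holds — this uses density and the continuity of $\Phi'$ and the structure of the subdifferential inequality restricted to test directions $z$ with $F_1'(u)z \in L^1$.

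Next, since $D(J) \setminus K$ is a metric space and hence paracompact, extract from the covering $\{U_{u_0}\}$ a locally finite countable refinement $(W_j)$ with associated points $(u_j) \subset D(J) \setminus K$ and associated compactly supported directions $h_j$, subordinate partition of unity $(\pi_j)$, and set $H(u) := \sum_j \pi_j(u) h_j$. Local finiteness makes this a finite sum near each point, so $H$ is locally Lipschitz; $\|H(u)\| \le 1$ by convexity of the ball; $\langle J'(u), H(u)\rangle = \sum_j \pi_j(u)\langle J'(u), h_j\rangle > \sum_j \pi_j(u) \cdot \frac12\|J'(u_j)\| \ge z(u)$ as required by (ii); (iii) holds since each $h_j$ is compactly supported and the sum is locally finite; (iv) holds by the neighbourhood choice above; (i) is arranged by choosing the refinement fine enough that $\mathrm{diam}(W_j) \to 0$ and $\mathrm{dist}(u_j, W_j) \to 0$ — one indexes by a diagonal argument so that only finitely many $W_j$ have diameter exceeding any given threshold. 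Finally, for (v), oddness, replace $H$ by $\widetilde H(u) := \frac12(H(u) - H(-u))$; since $D(J)$, $K$, and $J$ are symmetric (as $V$ and the nonlinearity are even in $u$), this is well-defined on $D(J) \setminus K$, still satisfies $\|\widetilde H(u)\| \le 1$, and $\langle J'(u), \widetilde H(u)\rangle > z(u)$ after symmetrizing the covering data (replace each $W_j$ by $W_j$ and $-W_j$), with compact support preserved.

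The main obstacle is the interplay between the nonsmoothness of $\Psi$ and the two technical demands (iii) compact support and (v) oddness. Getting a genuine locally Lipschitz vector field out of the subdifferential inequality requires care: one cannot simply take $H(u) = J'(u)/\|J'(u)\|$ because $u \mapsto J'(u)$ need not be continuous (it is merely the unique selection of $\partial J$), so the descent property must be verified through the convexity/semicontinuity inequality at each patch, and this is exactly where the $\frac12$ factor and the passage to a neighbourhood enter. Ensuring the compactly supported approximations $h_j$ still yield $\langle J'(u), h_j\rangle > \frac12\|J'(u_j)\|$ uses that $J'(u_j)$, being an element of $X^*$, is approximated in the dual pairing by test functions in $C_0^\infty(\rn)$, together with the second assertion of Proposition \ref{subdiff} which guarantees the pairing $\langle J'(u), z\rangle = \langle \Phi'(u), z\rangle + \irn F_1'(u) z\, dx$ is computable on such $z$; the remaining conditions are then bookkeeping with the partition of unity and the symmetrization.
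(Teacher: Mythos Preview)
The paper does not supply a proof of this proposition; it is stated with attribution to \cite[Lemma 2.7]{r12} and used as a black box (the only additional remark is Corollary~\ref{pseudogradientcoroll}, justified by ``an easy inspection'' of that reference). So there is nothing in the present paper to compare your argument against directly.

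That said, your outline is a faithful reconstruction of the construction in \cite{r12}: choose at each $u_j$ a compactly supported direction $h_j$ approximating $J'(u_j)/\|J'(u_j)\|$, use the second assertion of Proposition~\ref{subdiff} together with Lemma~\ref{bdd} (the $C^1$-regularity of $J$ on bounded domains) to see that $u\mapsto\langle J'(u),h_j\rangle$ is continuous near $u_j$ and hence the descent inequality persists on a neighbourhood, then patch by a locally Lipschitz partition of unity and symmetrize. Two small points worth tightening: (a) the persistence of $\langle J'(u),h_j\rangle>\tfrac12\|J'(u_j)\|$ really does rest on $h_j$ already having compact support (so that the explicit formula for $\langle J'(u),h_j\rangle$ and Lemma~\ref{bdd} apply), not on lower semicontinuity of $\Psi$ as you first suggest --- you correct this in your final paragraph, but the order of steps should be: approximate first, then take the neighbourhood; (b) to get conditions (i) and (iv) simultaneously one should build the covering in tiers, e.g.\ covering $D(J)\setminus K$ by balls of radius $\le 1/n$ with prescribed $\gamma\le 1/n$ for each $n$ and extracting a locally finite refinement within each tier, rather than invoking paracompactness once --- otherwise there is no mechanism forcing $\mathrm{diam}(W_j)\to 0$ and $\gamma_j\to 0$. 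With these adjustments your sketch matches the argument of \cite{r12}.
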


\begin{corollary} \label{pseudogradientcoroll}
For each $a\in\r$, $(W_j)$, $(u_j)$ and $H$ with the same properties as in Proposition \ref{pseudogradient} may be constructed on the set $\{u\in X: a < J(u) < \infty\}\setminus K$ (i.e., $D(J)\setminus K$ may be replaced by $\{u\in X: a < J(u) < \infty\}\setminus K$ throughout).
\end{corollary}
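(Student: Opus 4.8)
The plan is to observe that the set $O:=\{u\in X: a<J(u)<\infty\}\setminus K$ is an open subset of $D(J)\setminus K$, and then simply to rerun the construction behind Proposition~\ref{pseudogradient} on $O$ in place of $D(J)\setminus K$. Openness is immediate: since $\Phi\in C^1(X,\r)$ and $\Psi$ is convex and lower semicontinuous, $J=\Phi+\Psi$ is lower semicontinuous on $X$, whence $\{u\in X: J(u)>a\}$ is open in $X$. As $u\in D(J)$ already forces $J(u)<\infty$, we have $O=\{u\in X:J(u)>a\}\cap(D(J)\setminus K)$, so $O$ is open in $D(J)\setminus K$ in the topology inherited from $X$; being a subset of the normed space $X$, it is again a metric space, in particular paracompact.

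Next I would recall how the objects in Proposition~\ref{pseudogradient} are produced in \cite[Lemma 2.7]{r12}: that argument is entirely local. Around each point $v$ of the domain one uses $J'(v)\ne 0$ together with the lower semicontinuity of $\Psi$ and the second identity in Proposition~\ref{subdiff} to manufacture, on a small neighbourhood $U$ of $v$, a single vector $\xi\in X$ with $\|\xi\|\le1$ along which $\langle J'(\cdot),\xi\rangle$ stays bounded below by roughly $\tfrac12\|J'(v)\|$ and with $\operatorname{supp}\xi$ contained in a fixed ball; one then passes to a locally finite countable refinement of such neighbourhoods with diameters shrinking to $0$, glues the corresponding vectors by a subordinate locally Lipschitz partition of unity, and finally symmetrises to obtain oddness. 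The tail conditions (i) and (iv) are arranged along the way through the choice of the refinement and of the auxiliary numbers $\gamma_j$.

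The point is that none of these steps uses anything about $D(J)\setminus K$ beyond its being an open subset of $X$ on which $J'$ is defined, never vanishes, and enjoys the local compact-support property for the vectors employed — and $O$ shares all of these features. Hence, performing exactly the same construction but starting from points $v\in O$ and neighbourhoods $U\subset O$ yields a locally finite countable covering $(W_j)$ of $O$, points $(u_j)\subset O$, and a locally Lipschitz, odd vector field $H:O\to X$ satisfying (i)–(v). I do not expect a genuine obstacle here; the only thing that really needs checking is that the cited construction is local and that restricting to the open subset $O$ does not interfere with the ``$j\to\infty$'' conditions (i) and (iv), which it does not, since those are built in by the choice of refinement rather than by any global feature of the domain.
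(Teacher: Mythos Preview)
Your proposal is correct and takes essentially the same approach as the paper, which simply records that the corollary follows by an easy inspection of the proof of \cite[Lemma 2.7]{r12}. You have merely supplied the details of that inspection: the construction is purely local, $O$ is relatively open in $D(J)\setminus K$ and hence paracompact, and restricting the covering--partition-of-unity argument to $O$ affects none of the properties (i)--(v).
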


This follows by an easy inspection of the proof of \cite[Lemma 2.7]{r12}.

\begin{definition} \label{cpt-supp}
We shall say that a set $A\subset X$ has compact support if there exists $R>0$ such that $u(x)=0$ for all $|x|>R$ and $u\in A$.
\end{definition}

We shall also need a logarithmic Sobolev inequality \cite{r10} which holds for all $u\in H^{1}(\rn)$ and $a>0$:
\begin{equation} \label{logineq}
\irn u^{2}\log u^{2}\,dx \leq \frac{a^{2}}{\pi}\Vert \nabla u\Vert^{2}_{2}+\left(\log \Vert  u\Vert^{2}_{2}-N(1+\log a)\right)\Vert  u\Vert^{2}_{2}.
\end{equation}

\section{Proof of Theorem \ref{thm1}} \label{pf1}

\begin{proposition} \label{pscond}
If $(u_n)$ is a sequence such that $J(u_n)$ is bounded above and $J'(u_n)\to 0$, then $(u_n)$ has a convergent subsequence. In particular, $J$ satisfies the Palais-Smale condition.
\end{proposition}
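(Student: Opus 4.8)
The plan is to argue in two stages: first that a sequence $(u_n)$ with $J(u_n)$ bounded above and $J'(u_n)\to0$ is bounded in $X$, and then that boundedness, together with the compact embedding $X\hookrightarrow L^p(\rn)$ valid for $2\le p<2^*$, yields a convergent subsequence. Throughout $X$ is the space \eqref{space} with inner product \eqref{inner1}, $C$ is an upper bound for $J(u_n)$, and $\eps_n:=\|J'(u_n)\|\to0$.

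\emph{Step 1: boundedness.} Since $u_n\in D(J)$ one checks that $F_1'(u_n)u_n\in L^1(\rn)$ (the only delicate region is $\{|u_n|<\delta\}$, where $-u_n^2\log u_n^2\ge0$ and $\int_{\{|u_n|<\delta\}}(-u_n^2\log u_n^2)\,dx\le2\Psi(u_n)<\infty$), so applying the second assertion of Proposition \ref{subdiff} with $z=u_n$,
\[
\langle J'(u_n),u_n\rangle=\irn\big(|\nabla u_n|^2+V(x)u_n^2\big)\,dx-\irn u_n^2\log u_n^2\,dx=2J(u_n)-\|u_n\|_2^2 .
\]
Hence $\|u_n\|_2^2=2J(u_n)-\langle J'(u_n),u_n\rangle\le 2C+\eps_n\|u_n\|$. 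On the other hand, since $2J(u_n)=\irn(|\nabla u_n|^2+(V(x)+1)u_n^2)\,dx-\irn u_n^2\log u_n^2\,dx$, combining this with the logarithmic Sobolev inequality \eqref{logineq} (choosing $a$ so that $a^2/\pi=1/2$), the identity $\irn(|\nabla u_n|^2+(V+1)^+u_n^2)\,dx=\|u_n\|^2$, and the boundedness and compact support of $(V+1)^-$ (which hold because $V(x)\to\infty$), one obtains
\[
\tfrac12\|u_n\|^2\le 2C+\big(\log\|u_n\|_2^2+C'\big)\|u_n\|_2^2
\]
for a suitable constant $C'$. The elementary bound $(\log t+C')t\le\tfrac14t^2+C_1$ for $t\ge0$, followed by the substitution $\|u_n\|_2^2\le2C+\eps_n\|u_n\|$, turns this into $\tfrac12(1-\eps_n^2)\|u_n\|^2\le C_2$, and since $\eps_n\to0$ it follows that $(u_n)$ is bounded in $X$.

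\emph{Step 2: compactness.} Passing to a subsequence, $u_n\rh u$ in $X$, hence $u_n\to u$ in $L^p(\rn)$ for all $p\in[2,2^*)$ and a.e.; by Lemma \ref{properties}(iii) $u$ is a critical point, so $u\in D(J)$ and $\Psi(u)<\infty$. Write $\|u_n-u\|^2=\langle u_n,u_n-u\rangle-\langle u,u_n-u\rangle$; the second term tends to $0$ by weak convergence. Using $\langle\Phi'(u_n),v\rangle=\langle u_n,v\rangle-\irn(V+1)^-u_nv\,dx-\irn F_2'(u_n)v\,dx$ with $v=u_n-u$, the first integral tends to $0$ by strong $L^2$-convergence and the compact support of $(V+1)^-$, and the second by $|F_2'(s)|\le C_p|s|^{p-1}$, H\"older's inequality and strong $L^p$-convergence; hence $\|u_n-u\|^2=\langle\Phi'(u_n),u_n-u\rangle+o(1)$. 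Finally, taking $v=u$ in the subdifferential inequality $\langle\Phi'(u_n),v-u_n\rangle+\Psi(v)-\Psi(u_n)\ge\langle J'(u_n),v-u_n\rangle$ (valid since $J'(u_n)\in\partial J(u_n)$) gives $\langle\Phi'(u_n),u_n-u\rangle\le\Psi(u)-\Psi(u_n)+\eps_n\|u-u_n\|$, whose right-hand side has $\limsup\le0$ because $\eps_n\|u-u_n\|\to0$ and, $\Psi$ being convex and lower semicontinuous (hence weakly sequentially lower semicontinuous), $\liminf\Psi(u_n)\ge\Psi(u)$. Together with $\|u_n-u\|^2\ge0$ this forces $\langle\Phi'(u_n),u_n-u\rangle\to0$, so $u_n\to u$ in $X$. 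The Palais-Smale condition then follows, since by Lemma \ref{properties}(ii) any Palais-Smale sequence has $J'(u_n)\to0$.

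\emph{Main obstacle.} I expect Step 1 to be the crux: because the logarithmic nonlinearity makes $J$ unbounded both above and below, the bound on $\|u_n\|$ must be squeezed out of the interplay between the $L^2$-identity $\|u_n\|_2^2=2J(u_n)-\langle J'(u_n),u_n\rangle$ and the logarithmic Sobolev inequality, the delicate point being the absorption of the term $\|u_n\|_2^2\log\|u_n\|_2^2$. The compactness step is routine once the compact embedding $X\hookrightarrow L^p$ and the weak lower semicontinuity of $\Psi$ are available.
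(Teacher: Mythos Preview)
Your proof is correct and follows essentially the same two–stage approach as the paper: the $L^2$-identity $\|u_n\|_2^2 = 2J(u_n)-\langle J'(u_n),u_n\rangle$ combined with the logarithmic Sobolev inequality for boundedness, and then the subdifferential inequality with $v=u$ together with weak lower semicontinuity of $\Psi$ for strong convergence. The only cosmetic differences are that in Step~1 the paper absorbs $(\log\|u_n\|_2^2+1)\|u_n\|_2^2$ into $C(1+\|u_n\|^r)$ with $r\in(1,2)$ rather than into $\tfrac14\|u_n\|_2^4$ (your route works just as well, though the displayed inequality $\tfrac12(1-\eps_n^2)\|u_n\|^2\le C_2$ glosses over a harmless linear term), and in Step~2 the paper deduces $\|u_n\|\to\|u\|$ and invokes the Hilbert-space fact that weak plus norm convergence gives strong convergence, whereas you compute $\|u_n-u\|^2$ directly.
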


\begin{proof}
First we show that $(u_n)$ is bounded. The proof is almost the same as that of \cite[Lemma 2.9]{r12} but for the reader's convenience we include it. Choose $d\in\r$ such that $J(u_n)\le d$ for all $n$. Then
\begin{equation} \label{eqa}
\Vert u_{n}\Vert_{2}^{2}=\irn u_{n}^{2}\,dx = 2J(u_{n})-\langle J'(u_{n}), u_{n}\rangle\leq 2d+o(1)\Vert u_{n}\Vert \quad\text{as } n\rightarrow \infty.
\end{equation}
Taking $a>0$ small enough in \eqref{logineq} gives
\begin{equation} \label{eqb}
\irn u^{2}\log u^{2}dx \leq \frac{1}{2}\Vert\nabla u \Vert_{2}^{2}+C_{1}\left(\log \Vert u \Vert_{2}^{2}+1\right)\Vert u \Vert_{2}^{2}.
\end{equation}
So using \eqref{eqa} and \eqref{eqb}  we obtain
\begin{align*}
2d & \geq 2J(u_{n})=\Vert u_{n}\Vert^{2}-\irn(V(x)+1)^{-}u_{n}^{2}\,dx-\irn u_{n}^{2}\log u_{n}^{2}\,dx \\
&\geq \frac{1}{2}\Vert u_{n}\Vert^{2}-C_{2}\left(\log \Vert u_{n} \Vert_{2}^{2}+1\right)\Vert u_{n} \Vert_{2}^{2} 
\geq \frac{1}{2}\Vert u_{n}\Vert^{2}-C_{3}\left(1+\Vert u_{n}\Vert^{r}\right),
\end{align*}
where we take $r\in(1,2)$. Hence the sequence $(u_{n})$ is bounded. 

Passing to a subsequence, $u_n\rh u$ in $X$ for some $u$ and since the embedding $X\hookrightarrow L^p(\rn)$ is compact for $p\in[2,2^*)$ as we have mentioned in the introduction, $u_n\to u$ in $L^p(\rn)$ for all such $p$. Taking $v=u$ in \eqref{ps} gives
\begin{align}
\langle u_n,u-u_n\rangle - \irn V^-(x)u_n(u-u_n)\,dx & - \irn F_2'(u_n)(u-u_n)\,dx \label{a1} \\
& + \Psi(u)-\Psi(u_n) \ge -\eps_n\|u-u_n\|, \nonumber
\end{align}
hence
\begin{equation} \label{a2}
\|u\|^2-\|u_n\|^2 + \Psi(u)-\Psi(u_n) + o(1) \ge o(1). 
\end{equation}
Since $\liminf_{n\to\infty}\Psi(u_n)\ge \Psi(u)$ and  $\liminf_{n\to\infty}\|u_n\|^2\ge \|u\|^2$, the inequality above implies $\|u_n\|\to\|u\|$ and hence $u_n\to u$ in $X$. 
\end{proof}

\begin{remark} \label{rem}
\emph{
In the next section $V$ will satisfy the assumptions of Theorem \ref{thm2}. The first part of the argument above shows, after a slight modification, that $(u_n)$ is bounded. Indeed, since the norm $\|\,\cdot\,\|$ is equivalent to the standard norm in $H^1(\rn)$, we obtain 
\begin{align*}
2d & \geq 2J(u_{n})=\Vert u_{n}\Vert^{2}-\irn u_{n}^{2}\log u_{n}^{2}\,dx 
\geq \frac{1}{2}\Vert u_{n}\Vert^{2}-C_{2}\left(\log \Vert u_{n} \Vert_{2}^{2}+1\right)\Vert u_{n} \Vert_{2}^{2}  \\
& \geq \frac{1}{2}\Vert u_{n}\Vert^{2}-C_{3}\left(1+\Vert u_{n}\Vert^{r}\right),
\end{align*}
provided $a$ in \eqref{logineq} is taken so small that $\frac12$ in \eqref{eqb} is replaced by a constant $b$ such that $b\|\nabla u\|_2^2\le \frac12\|u\|^2$. 
}
\end{remark}

We shall prove Theorem \ref{thm1} by adapting the arguments of Bartsch's Fountain Theorem \cite[Theorem 2.5]{ba}, \cite[Theorem 3.6]{r16}  to our situation. First we prove a suitable deformation result.

\begin{lemma} \label{noncrit}
If $K_{d}=\emptyset$, then there exists $\eps_{0}>0$ such that there are no Palais-Smale sequences in $J^{d+2\eps_{0}}_{d-2\eps_{0}}$.
\end{lemma}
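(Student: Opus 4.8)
The plan is to argue by contradiction: if no such $\eps_0$ exists, then for every $\eps>0$ there is a Palais--Smale sequence in the strip $J_{d-2\eps}^{d+2\eps}$, so one can extract a Palais--Smale sequence $(u_n)$ with $J(u_n)\to d$ and $J'(u_n)\to 0$. By Proposition \ref{pscond} (the Palais--Smale condition, available since we are under the hypotheses of Theorem \ref{thm1}), a subsequence converges, $u_n\to u$ in $X$. The limit $u$ then satisfies $J(u)=d$ because $J(u_n)\to d$ and, by Lemma \ref{properties}(iii) (with strong convergence in particular giving weak convergence), $J'(u)=0$, so $u\in K_d$. This contradicts $K_d=\emptyset$, and the lemma follows.

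To make the first step precise I would take $\eps=1/k$ for $k=1,2,\dots$; the assumed failure of the conclusion yields, for each $k$, a Palais--Smale sequence contained in $J_{d-2/k}^{d+2/k}$, from which one picks a single element $v_k$ satisfying \eqref{ps} with error term $\le 1/k$ and with $|J(v_k)-d|\le 2/k$. Then $(v_k)$ is itself a Palais--Smale sequence with $J(v_k)\to d$, and by Lemma \ref{properties}(ii) this is equivalent to $J'(v_k)\to 0$ while $J(v_k)$ stays bounded (indeed bounded above by, say, $d+2$). Now Proposition \ref{pscond} applies verbatim.

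I do not expect any genuine obstacle here: the statement is the standard ``no Palais--Smale sequences at a regular value'' consequence of the Palais--Smale condition, and all the required pieces — boundedness of Palais--Smale sequences, compactness of the embedding $X\hookrightarrow L^p$, lower semicontinuity of $\Psi$, and the identification of the limit as a critical point — are already in place via Proposition \ref{pscond} and Lemma \ref{properties}. The only point demanding a line of care is the passage from ``for every $\eps$ there exist Palais--Smale sequences in the strip'' to the existence of one Palais--Smale sequence concentrating at level $d$; this is a routine diagonal-type selection, and the choice of the constant $2$ (versus any other) in the width of the strip is immaterial for the argument.
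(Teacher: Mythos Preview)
Your overall strategy matches the paper's: argue by contradiction, produce a Palais--Smale sequence at level $d$, apply Proposition~\ref{pscond} to get $u_n\to u$ in $X$, and conclude $u\in K_d$. The diagonal extraction you flag is indeed routine and the paper simply starts from ``we find a sequence $(u_n)$ such that $J(u_n)\to d$ and $J'(u_n)\to 0$'' without further comment.

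However, there is a genuine gap at the step you treat as automatic: the assertion that ``$J(u)=d$ because $J(u_n)\to d$'' presumes continuity of $J$ along strongly convergent sequences. This fails here. The functional $J=\Phi+\Psi$ has $\Phi\in C^1(X,\r)$, but $\Psi$ is only lower semicontinuous; indeed the introduction exhibits $u\in X$ with $J(u)=+\infty$, and Remark~\ref{cptsupp} explicitly warns of the ``lack of continuity of $J$''. From $u_n\to u$ you get $\Phi(u_n)\to\Phi(u)$ and hence $\Psi(u_n)\to d-\Phi(u)$, but lower semicontinuity then yields only $\Psi(u)\le d-\Phi(u)$, i.e.\ $J(u)\le d$. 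That does \emph{not} contradict $K_d=\emptyset$.

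The paper closes this gap by testing the Palais--Smale inequality \eqref{ps} with $v=u$:
\[
\langle\Phi'(u_n),u-u_n\rangle+\Psi(u)-\Psi(u_n)\ge -\eps_n\|u-u_n\|.
\]
Since $u_n\to u$ strongly and $\Phi'(u_n)$ is bounded, the left term and the right-hand side tend to $0$, giving $\limsup_n\Psi(u_n)\le\Psi(u)$. Combined with lower semicontinuity this forces $\Psi(u_n)\to\Psi(u)$, hence $J(u_n)\to J(u)=d$. So the ``one point demanding a line of care'' is precisely this continuity issue, not the diagonal selection.
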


\begin{proof}
Arguing by contradiction we find a sequence $(u_{n})$ such that $J(u_{n})\rightarrow d$ and $J'(u_n)\to 0$. According to Proposition \ref{pscond} and Lemma \ref{properties}(iii), $u_{n}\rightarrow u$ and $u\in K$, possibly after taking a subsequence. We shall show that $J(u)=d$. 
 Since $(u_n)$ is a Palais-Smale sequence,
\begin{equation*}
\langle \Phi'(u_{n}), u-u_{n} \rangle+\Psi(u)-\Psi(u_{n})\geq -\eps_n\|u-u_n\|, \quad \eps_n\to 0^+,
\end{equation*}
and thus $\limsup_{n\to\infty}\Psi(u_n)\le \Psi(u)$. So it follows from the lower semicontinuity of $\Psi$ that $\Psi(u_n)\to \Psi(u)$. Hence $J(u_n)\to J(u)$ and $J(u)=d$.
As $u\in K$, this contradicts the hypothesis $K_{d}=\emptyset$.
\end{proof}

Suppose $K_d=\emptyset$ and let $\eps_{0}$ be as in Lemma \ref{noncrit}. Let  $\chi: X\rightarrow [0, 1]$ be an even locally Lipschitz continuous function such that $\chi=0$ on $J^{d-\eps_0}$ and $\chi>0$ otherwise. Consider the flow $\eta$ given by
\begin{equation} \label{flow}
\left\{
\begin{array}{l}
\frac{d}{dt}\eta(t, u)=-\chi(\eta(t, u))H(\eta(t, u)), \\ [2pt]
 \eta(0, u)=u, \quad u\in J^{d+\eps_0},
\end{array}
\right.
\end{equation}
where $H: \{u\in X: d-2\eps_0<J(u)<\infty\}\setminus K\to X$ is the vector field as in Corollary \ref{pseudogradientcoroll} and $\chi(u)H(u)$ should be understood as 0 if $J(u)\le d-\eps_0$. It has been shown in \cite{r12} (see the beginning of the proof of Lemma 2.13 there) that $t\mapsto J(\eta(t,u))$ is differentiable and $\frac{d}{dt}J(\eta(t, u)) = \left\langle J'(\eta(t, u)), \frac{d}{dt}\eta(t,u) \right\rangle$. So
\begin{equation*}
\frac{d}{dt}J(\eta(t, u))=-\langle J'(\eta(t, u)), \chi(\eta(t, u))H(\eta(t, u))\rangle\leq -\chi(\eta(t, u))z(\eta(t, u))\leq 0,
\end{equation*}
thus $t\mapsto J(\eta (t, u)$ is nonincreasing and, since $\|H(u)\|\le 1$ and $K\cap J^{d+\eps_{0}}_{d-\eps_{0}}=\emptyset$, $\eta(t,u)$ exists for all $t\ge 0$. 

\begin{proposition}[Deformation] \label{deformation} 
Suppose $K_{d}=\emptyset$ and $\eps_{0}$ is as in Lemma \ref{noncrit}. If $\eps\in(0,\eps_0)$, then for each compact set $A\subset J^{d+\eps}\cap C_0^\infty(\rn)$  there exists $T>0$ such that $J(\eta(T,A))\subset J^{d-\eps}$. 
\end{proposition}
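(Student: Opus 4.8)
The plan is to let the deformation flow $\eta$ run for a single fixed finite time, using that $J$ is nonincreasing along $\eta$ together with a uniform positive lower bound for $z$ on the strip $J_{d-\eps}^{d+\eps}$; the latter is the point at which Lemma \ref{noncrit} and the structure of the pseudo-gradient enter. First I would record what is already available: for $u\in J^{d+\eps}\subset J^{d+\eps_0}$ the trajectory $\eta(t,u)$ exists for all $t\ge0$, the map $t\mapsto J(\eta(t,u))$ is differentiable and nonincreasing, and $\frac{d}{dt}J(\eta(t,u))\le-\chi(\eta(t,u))z(\eta(t,u))$. I would also assume, slightly strengthening the requirement on $\chi$, that $\chi\equiv1$ on $J_{d-\eps}$: since $J=\Phi+\Psi$ is lower semicontinuous, $J^{d-\eps_0}$ and $J_{d-\eps}$ are disjoint closed subsets of $X$, and $u\mapsto\operatorname{dist}(u,J^{d-\eps_0})/\bigl(\operatorname{dist}(u,J^{d-\eps_0})+\operatorname{dist}(u,J_{d-\eps})\bigr)$ is an even, locally Lipschitz, $[0,1]$-valued function with the required properties, so this does not affect \eqref{flow}. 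Finally, by Lemma \ref{noncrit} there are no critical points in $J_{d-2\eps_0}^{d+2\eps_0}$, so as long as $J(\eta(t,u))\in[d-\eps,d+\eps]$ the point $\eta(t,u)$ stays in the domain $\{d-2\eps_0<J<\infty\}\setminus K$ of $H$ and $\frac{d}{dt}J(\eta(t,u))\le-z(\eta(t,u))$.

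The crux is the uniform estimate $\beta:=\inf\{z(v):v\in X,\ d-\eps\le J(v)\le d+\eps\}>0$, which I would prove by contradiction. Given $v_k$ with $J(v_k)\in[d-\eps,d+\eps]$ and $z(v_k)\to0$, choose (using that the covering is locally finite) an index $j_k$ with $v_k\in W_{j_k}$ and $z(v_k)=\tfrac12\|J'(u_{j_k})\|$. If $(j_k)$ had a bounded subsequence it would be eventually constant on it, forcing $z(v_k)=\tfrac12\|J'(u_j)\|>0$ there since $u_j\notin K$ — a contradiction; hence $j_k\to\infty$, so $\|J'(u_{j_k})\|=2z(v_k)\to0$ and $\gamma_{j_k}\to0$. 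By Proposition \ref{pseudogradient}(iv), $J(u_{j_k})<J(v_k)+\gamma_{j_k}\le d+\eps+\gamma_{j_k}$; and since the vector field of Corollary \ref{pseudogradientcoroll} is constructed over $\{d-2\eps_0<J<\infty\}\setminus K$, every $u_j$ — in particular $u_{j_k}$ — satisfies $J(u_{j_k})>d-2\eps_0$. Thus $(u_{j_k})$ is eventually contained in $J_{d-2\eps_0}^{d+2\eps_0}$, so $(J(u_{j_k}))$ is bounded, and since $J'(u_{j_k})\to0$, Lemma \ref{properties}(ii) shows $(u_{j_k})$ is a Palais–Smale sequence in $J_{d-2\eps_0}^{d+2\eps_0}$, contradicting Lemma \ref{noncrit}. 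Hence $\beta>0$.

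It then remains to integrate the differential inequality. Put $T:=2\eps/\beta$, fix $u\in A$ (so $J(u)\le d+\eps$), and let $\tau(u):=\inf\{t\ge0:J(\eta(t,u))\le d-\eps\}$. For $0\le t<\tau(u)$ we have $d-\eps<J(\eta(t,u))\le d+\eps$, hence $\chi(\eta(t,u))=1$ and $\frac{d}{dt}J(\eta(t,u))\le-z(\eta(t,u))\le-\beta$, so $J(\eta(t,u))\le d+\eps-\beta t$ there. If $\tau(u)>T$, then taking $t=T$ would give $J(\eta(T,u))\le d+\eps-\beta T=d-\eps$, contradicting $T<\tau(u)$; thus $\tau(u)\le T$, and by continuity and monotonicity of $t\mapsto J(\eta(t,u))$ we get $J(\eta(T,u))\le J(\eta(\tau(u),u))\le d-\eps$. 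As $u\in A$ was arbitrary, $\eta(T,A)\subset J^{d-\eps}$, which is the assertion.

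I expect the main obstacle to be the uniform bound $\beta>0$. The subtle point is that $z(v_k)\to0$ only forces some nearby reference point $u_{j_k}$ to have small gradient, not $v_k$ itself, so one must first extract $j_k\to\infty$ to make this transfer meaningful, and then control $J(u_{j_k})$ from both sides — the upper bound from Proposition \ref{pseudogradient}(iv) and, crucially, the lower bound from the fact that all the $u_j$ lie in the construction set $\{J>d-2\eps_0\}$ — before Lemma \ref{noncrit} applies. I would also remark that the compact support of $A$ is not really needed in this argument: here the Palais–Smale condition of Proposition \ref{pscond} does the work that the locally compact support of $H$ does in the periodic setting of \cite{r12}; via Proposition \ref{pseudogradient}(iii) it does, however, guarantee that $\eta([0,T]\times A)$ has compact support.
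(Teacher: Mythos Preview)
Your argument contains a genuine gap, and it lies precisely where you dismiss the compact-support hypothesis as inessential.

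The error is the claim that lower semicontinuity of $J$ makes $J_{d-\eps}=\{u:J(u)\ge d-\eps\}$ closed. Lower semicontinuity gives closedness of \emph{sublevel} sets $J^{c}$, not of superlevel sets $J_{c}$; indeed, since $\Psi$ is only lower semicontinuous, there may well exist $u$ with $J(u)\le d-\eps_0$ and a sequence $u_n\to u$ with $J(u_n)\ge d-\eps$ (i.e.\ $u\in J^{d-\eps_0}\cap\overline{J_{d-\eps}}$). At such a point the denominator in your formula for $\chi$ vanishes, so the proposed cutoff is not well defined. More to the point, for a merely lower semicontinuous $J$ one cannot in general produce a locally Lipschitz $\chi$ that vanishes on $J^{d-\eps_0}$ and is identically $1$ on $J_{d-\eps}$, because these sets need not be metrically separated. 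Without $\chi\equiv 1$ on the strip the key inequality $\frac{d}{dt}J(\eta(t,u))\le -\beta$ fails, and the direct integration giving a single uniform $T=2\eps/\beta$ breaks down.

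This is exactly the obstacle that the paper's hypothesis $A\subset C_0^\infty(\rn)$ is designed to remove, and the paper says so explicitly in Remark~\ref{cptsupp}. The paper does \emph{not} attempt to strengthen $\chi$; it only asks that $\chi>0$ off $J^{d-\eps_0}$, accepts that the $\eta$-time needed to reach $J^{d-\eps}$ may vary with the starting point, and then uses that $H$ has locally compact support (Proposition~\ref{pseudogradient}(iii)) so that $\eta([0,t]\times A)$ consists of functions supported in a fixed ball. On such functions $J$ is $C^1$ by Lemma~\ref{bdd}, hence continuous, and a finite-cover compactness argument upgrades the pointwise entry times $T_0(u_0)$ to a uniform $T$. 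Your remark that ``the compact support of $A$ is not really needed'' is therefore mistaken: the Palais--Smale information (through Lemma~\ref{noncrit}) yields the lower bound $\beta>0$ --- and that part of your proof matches the paper's --- but it does nothing to restore continuity of $J$, which is what the uniform-time step actually requires.
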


\begin{proof}
First we show that $\tau := \inf\{z(u): u\in J_{d-\eps_0}^{d+\eps_0}\} > 0$. Suppose $\tau=0$. Then we can find a sequence $w_{n}\in J^{d+\eps_0}_{d-\eps_0}$ such that $z(w_{n})\rightarrow 0$. By Corollary \ref{pseudogradientcoroll} there exists a sequence $(u_{n})$ such that $J'(u_{n})\rightarrow 0$ and $d-2\eps_{0}< J(u_{n})\leq J(w_{n})+\gamma_{n}\leq d+2\eps_{0}$ for $n$ sufficiently large. So $(u_n)$ is a Palais-Smale sequence, a contradiction to Lemma \ref{noncrit}.

Consider the auxiliary flow
\[
\left\{
\begin{array}{l}
\frac{d}{dt}\sigma(t, u)=-H(\sigma(t, u)), \\ [2pt]
 \sigma(0, u)=u, \quad u\in J^{d+\eps_0}_{d-\eps_0}.
\end{array}
\right.
\]
For each $u$ as above this flow exists as long as $J(\sigma(t,u)) > d-2\eps_0$. So
\begin{align*}
J(\sigma(t,u))-J(u) = -\int_0^t\langle J'(\sigma(s,u),H(\sigma(s,u))\rangle\,ds \le -\int_0^t z(\sigma(s,u))\,ds 
\le -\tau t
\end{align*}
and therefore $J(\sigma(t,u)) \le J(u)-\tau t \le d+\eps_0 -\tau t\le d-\eps_0$ if $t\ge 2\eps_0/\tau$. In particular, $\sigma(\cdot,u)$ must enter the set $J^{d-\eps_0}$. 

Let now $A\subset J^{d+\eps}\cap C_0^\infty(\rn)$. If $u\in J^{d-\eps_0}$, then $J(\eta(t,u))=u$ for all $t\ge 0$. On the set $d-\eps_0<J(u)\le d+\eps_0$, $\eta$ and $\sigma$ have the same flow lines. So in particular, $J(\eta(t,u)) < d-\eps$ for some $t>0$. Since 
\[
\eta(t,u) = u - \int_0^t\chi(\eta(s, u))H(\eta(s, u))\,ds
\]
and $H$ has locally compact support, it is easy to see that $\eta([0,t]\times A)$ has compact support (in the sense of Definition \ref{cpt-supp}) for each $t$. Given $u_0\in A$, we can find $T_0>0$ such that $J(\eta(T_0,u_0)) < d-\eps$. According to Lemma \ref{bdd}, the restriction of $J$ to $\eta([0,T_0]\times A)$ is continuous, hence $J(\eta(T_0,u))<d-\eps$ for all $u$ in a neighbourhood $A_0$ of $u_0$ in $A$. Using compactness of $A$ we find a finite covering $(A_j)$ of $A$ and $T_j>0$ such that $J(\eta(T_j,u)) < d-\eps$ for all $u\in A_j$. Taking $T := \max_j T_j$, the conclusion follows.
\end{proof}

\begin{remark} \label{remdef}
\emph{
For the purpose of the next section let us note that in the proof above we have not used the Palais-Smale condition but only the fact that there are no Palais-Smale sequences in $J^{d+2\eps_0}_{d-2\eps_0}$. 
}
\end{remark}

Since $X$ is separable and $C_{0}^{\infty}(\rn)$ is dense in $X$, there exists a sequence $(X_k)\subset C_0^\infty(\rn)$ of subspaces such that $\dim X_k=k$ and $X=\ol{\cup_{k=1}^\infty X_k}$. Let $Z_k := X_{k}^\bot$ and
\begin{equation*}
B_{k}:=\{u\in X_{k}: \Vert u\Vert\leq \rho_{k}\},\quad N_{k}:=\{u\in Z_{k-1}: \Vert u\Vert= r_{k}\}, \quad \text{where } \rho_{k}>r_{k}>0.
\end{equation*}

\begin{lemma} [{\cite[Lemma 3.4]{r16}}] \label{intersection}
If $\gamma\in C(B_{k}, X)$ is odd and $\gamma|_{\partial B_{k}}=id$, then $\gamma(B_{k})\cap N_{k}\neq \emptyset$.
\end{lemma}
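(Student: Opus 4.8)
The plan is to reduce the claim to a finite-dimensional Borsuk-type argument, since everything happens inside the finite-dimensional spaces $X_k$ and $X_{k-1}$. Let $P\colon X\to X_{k-1}$ denote the orthogonal projection; since $Z_{k-1}=X_{k-1}^{\perp}$, a vector $w\in X$ lies in $N_k$ exactly when $Pw=0$ and $\|w\|=r_k$. Hence it suffices to find $u\in B_k$ with $P\gamma(u)=0$ and $\|\gamma(u)\|=r_k$.

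First I would introduce the set
\[
U:=\{u\in B_k:\ \|\gamma(u)\|<r_k\}.
\]
Since $\gamma$ is odd, $\gamma(0)=0$, so $0\in U$; since $\gamma=\mathrm{id}$ on $\partial B_k$ and $\rho_k>r_k$, no point of $\partial B_k$ can belong to $\overline U$ (a point $u\in\partial B_k$ satisfies $\|\gamma(u)\|=\|u\|=\rho_k>r_k$). Consequently $\overline U\subset B_k\setminus\partial B_k$, so $U$ equals $\{u\in X_k:\|u\|<\rho_k,\ \|\gamma(u)\|<r_k\}$ and is a bounded, open, symmetric neighbourhood of $0$ in $X_k\cong\r^k$. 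Moreover on its topological boundary $\partial U$ (taken in $X_k$) one has $\|\gamma(u)\|=r_k$, and $\partial U\subset B_k$.

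Next I would argue by contradiction: assume $P\gamma(u)\neq0$ for every $u\in\partial U$, and look at the continuous map
\[
G\colon\overline U\to X_{k-1}\times\r\cong\r^k,\qquad G(u):=\bigl(P\gamma(u),\ r_k-\|\gamma(u)\|\bigr).
\]
The second coordinate is chosen precisely so that it vanishes on $\partial U$; thus on $\partial U$ we have $G(u)=(P\gamma(u),0)$, which is odd in $u$ (the first coordinate is odd since $\gamma$ is and $P$ is linear) and, by the standing assumption, nonzero. Borsuk's theorem then gives that $\deg(G,U,0)$ is odd, in particular nonzero, so $G(u)=0$ for some $u\in U$; but $G(u)=0$ forces $\|\gamma(u)\|=r_k$, contradicting $u\in U$. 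Hence some $u\in\partial U$ satisfies $P\gamma(u)=0$, and since $\|\gamma(u)\|=r_k$ for such $u$, we conclude $\gamma(u)\in N_k$, so $\gamma(B_k)\cap N_k\neq\emptyset$.

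The only genuinely delicate point is the bookkeeping around Borsuk's theorem: one must recall that oddness of $G$ on $\partial U$ alone already forces the degree to be odd, and it is exactly the choice of the auxiliary coordinate $r_k-\|\gamma(\cdot)\|$ — which vanishes on $\partial U$ and is nonnegative on $\overline U$ — that renders $G$ odd there even though $G$ is not odd on all of $\overline U$. Everything else is routine topology. (This is, up to minor rephrasing, the argument of \cite[Lemma 3.4]{r16}, reproduced here for completeness.)
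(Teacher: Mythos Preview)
Your argument is correct and is essentially the standard Borsuk--degree proof; the paper itself gives no proof at all but simply cites \cite[Lemma~3.4]{r16}, whose argument you have faithfully reproduced. One small remark: the observation that the auxiliary coordinate $r_k-\|\gamma(\cdot)\|$ is nonnegative on $\overline U$ is not actually used---what matters is only that it vanishes on $\partial U$, so that $G|_{\partial U}$ is odd and Borsuk's theorem applies.
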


\begin{proposition} \label{fountain}
There exist $\rho_{k}>r_{k}>0$ such that
\[
a_{k}:= \max_{\substack{u\in X_k \\ \|u\|=\rho_k}}J(u) \le 0 \text{ for all } k \quad \text{and} \quad b_k := \inf_{u\in N_k} J(u) \to\infty \text{ as } k\to\infty.
\]
\end{proposition}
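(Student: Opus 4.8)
The plan is to verify the two standard Fountain-theorem conditions separately, adapting the classical arguments (as in \cite[Theorem 3.6]{r16}) to the present nonsmooth setting.

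\medskip

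\noindent\textbf{The estimate $a_k\le 0$.} First I would fix $k$ and work on the finite-dimensional space $X_k\subset C_0^\infty(\rn)$, on which all norms are equivalent. For $u\in X_k$ I would estimate $J(u)$ from above: using the definition \eqref{fcl} of $J$ and discarding the (nonnegative, up to a harmless lower-order term) gradient and potential part against the logarithmic term, the key point is that $\frac12\irn u^2\log u^2\,dx$ grows faster than the quadratic terms as $\|u\|\to\infty$. More precisely, for $u$ with $\|u\|=\rho$ large, writing $u=\rho v$ with $\|v\|=1$ and using $\irn u^2\log u^2 = \rho^2\irn v^2\log v^2 + \rho^2(\log\rho^2)\|v\|_2^2$, one sees that $J(u)\le C\rho^2 - c\rho^2\log\rho^2 \to -\infty$ uniformly for $v$ in the unit sphere of $X_k$ (compactness of that sphere handles the uniformity, together with the fact that $v\mapsto\irn v^2\log v^2\,dx$ is continuous on $X_k$ and $\|v\|_2$ is bounded below there). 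Hence one may choose $\rho_k>0$ so large that $a_k\le 0$; note $J$ is finite on $X_k$ since $X_k\subset C_0^\infty(\rn)$, so Lemma \ref{bdd} applies and $J|_{X_k}$ is continuous, making the max well defined.

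\medskip

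\noindent\textbf{The estimate $b_k\to\infty$.} This is the harder half. Set
\[
\beta_k := \sup_{\substack{u\in Z_{k-1}\\ \|u\|=1}} \|u\|_p
\]
for a fixed $p\in(2,2^*)$; the compactness of the embedding $X\hookrightarrow L^p(\rn)$ forces $\beta_k\to 0$ as $k\to\infty$ (this is the standard lemma, e.g. \cite[Lemma 3.8]{r16}, whose proof uses weak convergence to $0$ of any bounded sequence in $\bigcap_k Z_k$). For $u\in N_k$, i.e. $u\in Z_{k-1}$ with $\|u\|=r_k$, I would bound $J(u)$ from below. The quadratic part contributes $\frac12\|u\|^2 = \frac12 r_k^2$ (the form \eqref{inner1} need not control the full $H^1$-norm, but on $X$ it equals $\|u\|^2$ by definition, and the indefinite part $\irn(V+1)^-u^2$ is absorbed: under the hypotheses of Theorem \ref{thm1} the quadratic form $\irn(|\nabla u|^2+(V+1)u^2)$ is positive definite on a subspace of finite codimension, so for $k$ large $\frac12 J(u)$'s quadratic part is $\ge \frac14\|u\|^2$, say). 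For the logarithmic term I would use the elementary bound $s^2\log s^2 \le C_p(s^2 + |s|^p)$ for $|s|\ge 1$ and $s^2\log s^2\le 0$ for $|s|\le 1$, giving $\irn u^2\log u^2\,dx \le C_p\|u\|_2^2 + C_p\|u\|_p^p \le C_p' r_k^2 + C_p\beta_k^p r_k^p$. Combining, $J(u)\ge \frac14 r_k^2 - C_p' r_k^2 - C_p\beta_k^p r_k^p$; but the $C_p' r_k^2$ term threatens to kill the leading term, so one must instead use the logarithmic Sobolev inequality \eqref{logineq} with $a$ small to replace the coefficient: $\irn u^2\log u^2\,dx \le \frac14\|\nabla u\|_2^2 + C(\log\|u\|_2^2+1)\|u\|_2^2 \le \frac14\|u\|^2 + C_1 r_k^2(\log r_k^2 + 1)$, so $J(u) \ge \frac18 r_k^2 - C_1 r_k^2(\log r_k^2+1)$; this is still not enough, so the correct choice is to combine with the $L^p$-bound. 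Following \cite{r16}, I would choose $r_k := (C_p\beta_k^p\, p)^{-1/(p-2)}$ (up to constants), which maximizes the lower bound; then $r_k\to\infty$ since $\beta_k\to 0$, and a direct computation gives $b_k\ge \left(\frac14-\frac1p\right)C_p^{-2/(p-2)}\beta_k^{-2p/(p-2)} - o(r_k^2)\to\infty$. The main obstacle, and the place requiring care, is precisely reconciling the two estimates for the logarithmic term so that the quadratic leading term survives: the logarithmic Sobolev inequality \eqref{logineq} is needed to make the $L^2$-coefficient as small as we wish, and only after absorbing that does the $\beta_k^p r_k^p$ term (small because $\beta_k\to 0$) allow the optimization in $r_k$ to yield divergence.

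\medskip

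\noindent Finally I would record that $\rho_k>r_k>0$ can be arranged: once $r_k$ is fixed as above (and $r_k\to\infty$), the argument of the first part gives $\rho_k$ as large as needed, in particular $\rho_k>r_k$.
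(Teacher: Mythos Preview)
Your treatment of $a_k\le 0$ is correct and is essentially the paper's argument.

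For $b_k\to\infty$ there is a genuine gap. You correctly identify the obstacle---the $L^2$-type contribution in the upper bound for $\irn u^2\log u^2\,dx$ threatens to swallow the leading $\frac14 r_k^2$---but your proposed ``combination'' of the elementary bound with the logarithmic Sobolev inequality is never carried out, and the final lower bound you write down does not follow from either of the two intermediate estimates you actually obtained. As it stands, the argument does not close.

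There are two clean fixes. The first stays with your $L^p$-based $\beta_k$: your elementary bound is needlessly weak. For $|s|\ge 1$ one has $\log s^2 \le \frac{2}{p-2}|s|^{p-2}$, hence $s^2\log s^2 \le \frac{2}{p-2}|s|^p$ with \emph{no} $s^2$ term, so $\irn u^2\log u^2\,dx \le C_p\|u\|_p^p \le C_p\beta_k^p r_k^p$. The term $\irn (V+1)^-u^2\,dx$ lives on a compact set and is bounded by $C\|u\|_p^2 \le C\beta_k^2 r_k^2 \le \frac14 r_k^2$ for $k$ large via H\"older; your optimization then goes through. The second fix is the paper's route, which is slightly slicker: define $\beta_k$ via the $L^2$-norm instead of $L^p$, apply \eqref{logineq} to obtain $J(u) \ge \frac14\|u\|^2 - C_1(\log\|u\|_2^2+1)\|u\|_2^2$, and then use the scalar inequality $(\log t+1)t \le C_2 t^{p/2}+C_3$ to get
\[
J(u) \ge \tfrac14\|u\|^2 - C_2\|u\|_2^p - C_3 \ge \tfrac14\|u\|^2 - C_2\beta_k^p\|u\|^p - C_3.
\]
With the simple choice $r_k = 1/\beta_k$ this gives $b_k \ge \frac{1}{4\beta_k^2}-C_2-C_3\to\infty$, no optimization needed. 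The step you were missing is that after the logarithmic Sobolev inequality the entire residual is a function of $\|u\|_2$ alone, so it is $\|u\|_2$ (not $\|u\|_p$) that one should make small via $\beta_k$.
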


\begin{proof}
Let $u=sw$, where $u\in X_k$ and $\|w\|_2=1$. Then 
\[
J(sw) = \frac{s^2}2\left(\irn\left(|\nabla w|^2+(V(x)+1)w^2\right)dx -\log s^2 - \irn w^2\log w^2\,dx\right).
\]
Since all norms in $X_k$ are equivalent and $X_k\subset C_0^\infty(\rn)$, both integrals above are uniformly bounded. Hence $J(sw)\to-\infty$ uniformly in $w$ as $s\to\infty$, so there exists $\rho_k$ such that $a_k\le 0$. Moreover, $\rho_k$ may be chosen as large as we need.

Let 
\[
\beta_{k} := \sup_{\substack{u\in Z_{k-1}\\ \|u\|=1}}\|u\|_2.
\]
Then $\beta_k\to 0$. The proof is the same as in \cite[Lemma 3.8]{r16} but we include it for the reader's convenience. The sequence $(\beta_k)$ is positive and decreasing, hence $\beta_k\to\beta\ge 0$ and $\|u_k\|_2\ge \beta_k/2$ for some $u_k\in Z_{k-1}$, $\|u_k\|=1$. Since $u_k\rh 0$ in $X$, $u_k\to 0$ in $L^2(\rn)$. This implies that $\beta=0$. 

Using  \eqref{eqb} as in the proof of Proposition \ref{pscond} we obtain
\begin{align*}
J(u) & =\frac12\Vert u\Vert^{2}-\frac12\irn(V(x)+1)^{-}u^{2}\,dx-\frac12\irn u^{2}\log u^{2}\,dx \\
&\geq \frac{1}{4}\Vert u\Vert^{2}-C_1\left(\log \Vert u \Vert_{2}^{2}+1\right)\Vert u \Vert_{2}^{2} 
\geq \frac{1}{4}\Vert u\Vert^{2}-C_{2}\|u\|_2^{p}-C_3 \\
& \ge \frac14\|u\|^2-C_2\beta_k^p\|u\|^p-C_3, 
\end{align*}
where $p\in(2,2^*)$. Let $r_k=1/\beta_k$ and $\|u\|=r_k$. Then
\[
J(u) \ge \frac1{4\beta_k^2} -C_2-C_3 \to \infty \quad \text{as } k\to\infty
\]
and hence $b_k\to\infty$. Since we may choose $\rho_k>r_k$, the proof is complete.
\end{proof}

\begin{proof}[Proof of Theorem \ref{thm1}]
Let 
\[
\Gamma_{k} := \left\{\gamma\in C(B_{k}, X): \gamma\text{ is odd, } \gamma|_{\partial B_{k}}=id \text{ and } \gamma(B_{k})\text { has compact support}\right\}
\]
and
\[
d_{k}:=\inf_{\gamma\in \Gamma_{k}}\max_{u\in B_{k}} J(\gamma (u)).
\]
Since $\gamma(B_k)\cap N_k\ne\emptyset$ according to Lemma \ref{intersection}, $d_k\ge b_k \to\infty$ and it remains to show that $K_{d_k}\ne\emptyset$ if $k$ is large. Assuming the contrary, choose $\eps_0, \eps$ and $T$ as in Proposition \ref{deformation} and let $\gamma\in\Gamma_k$ be such that $\gamma (B_k) \subset J^{d_k+\eps}$. Let $\beta(u) := \eta(T,\gamma(u))$, where $\eta $ is the flow \eqref{flow}. Since $\eta(T,u) = u$ for all $u\in J^{d_k-\eps_0}$, $\beta\in\Gamma_k$. By Proposition \ref{deformation}, $\beta(B_k)\subset J^{d_k-\eps}$, a contradiction to the definition of $d_k$. 
\end{proof}

\begin{remark} \label{cptsupp}
\emph{
It was important for the argument of Proposition \ref{deformation} that the set $A$ has compact support and the flow $\eta$ has the property that $\eta([0,t],A)$ has compact support for each $t\ge 0$. Without this it is not clear whether $T$ as required can be found ($T_0$ in the proof of this proposition exists for each $u_0$ but a neighbourhood $A_0$ on which $J(\eta(T_0),u) < d-\eps$ may not exist because of the lack of continuity of $J$). 
}

\emph{
We would like to point out here that the argument of Theorem 1.1 in \cite{r12} contains a minor gap because the property of compact support is not assumed there. This gap can be removed by requiring that the class $\mathcal{H}$ of mappings introduced there takes sets having compact support into sets with the same property and then minimaxing over compact sets with compact support. Also the proof of Theorem 1.2 in \cite{r12} requires a small modification: the paths in the Mountain Pass argument need to be approximated by paths having compact support. This can be done in the same way as in the proof of Theorem \ref{thm2} below. 
}
\end{remark}

\section{Proof of Theorem \ref{thm2}} \label{pf2}

Here we work in the space $X = H^1(\rn)$ and the functional \eqref{fcl} can be written in the form
\[
J(u) = \frac12\|u\|^2-\frac12\irn u^2\log u^2\,dx.
\] 

\begin{proposition} \label{boundedness}
If $(u_n)$ is a sequence such that $J(u_n)$ is bounded above and $J'(u_n)\to 0$, then $(u_n)$ is bounded. In particular, $(u_n)$ is a Palais-Smale sequence.
\end{proposition}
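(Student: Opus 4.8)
The plan is to adapt the boundedness argument already carried out in Proposition \ref{pscond} (and indicated in Remark \ref{rem}) to the present setting, where $X = H^1(\rn)$ with the equivalent norm coming from \eqref{inner2}. First I would fix $d$ with $J(u_n)\le d$ for all $n$. As in \eqref{eqa}, testing the Palais--Smale inequality (or rather using $\langle J'(u_n),u_n\rangle = o(1)\|u_n\|$, which is legitimate by Proposition \ref{subdiff} since $F_1'(u_n)u_n\in L^1(\rn)$ is controlled by the growth of $s\log s^2$ near $0$ together with the $H^1$ bound) one gets $\|u_n\|_2^2 = 2J(u_n) - \langle J'(u_n),u_n\rangle \le 2d + o(1)\|u_n\|$. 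Then I would apply the logarithmic Sobolev inequality \eqref{logineq} with $a$ chosen so small that the coefficient $b$ of $\|\nabla u_n\|_2^2$ satisfies $b\|\nabla u_n\|_2^2 \le \tfrac12\|u_n\|^2$ (possible since $\|\cdot\|$ is equivalent to the standard $H^1$ norm and in particular dominates $\|\nabla\cdot\|_2$).

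Combining these, exactly as in Remark \ref{rem}, gives
\begin{align*}
2d \ge 2J(u_n) = \|u_n\|^2 - \irn u_n^2\log u_n^2\,dx \ge \tfrac12\|u_n\|^2 - C_2(\log\|u_n\|_2^2 + 1)\|u_n\|_2^2.
\end{align*}
Feeding in the bound $\|u_n\|_2^2 \le 2d + o(1)\|u_n\|$ and using that $t\mapsto (\log t + 1)t$ grows slower than any power, one estimates the right-hand side from below by $\tfrac12\|u_n\|^2 - C_3(1 + \|u_n\|^r)$ for some $r\in(1,2)$. Since $r<2$, this inequality forces $(\|u_n\|)$ to be bounded. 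Finally, once boundedness is established, $(J(u_n))$ is bounded (above by $d$ and below since $J$ never takes the value $-\infty$, as noted at the start of Section \ref{prel}), and then Lemma \ref{properties}(ii) gives immediately that $(u_n)$ is a Palais--Smale sequence.

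The only genuinely delicate point is the step $\langle J'(u_n),u_n\rangle = o(1)\|u_n\|$: one must make sure $u_n$ is an admissible test function in the second formula of Proposition \ref{subdiff}, i.e.\ that $F_1'(u_n)u_n\in L^1(\rn)$. This holds because $|F_1'(s)s|\le C(s^2 + |s|^p)$ with $p\in(2,2^*)$ (from the definition of $F_1$ and the bound $|s\log s^2|\le C_p(1+|s|^{p-1})$), and $u_n\in H^1(\rn)\hookrightarrow L^p(\rn)$. Everything else is the bookkeeping already present in the proof of Proposition \ref{pscond}, with the single simplification that the term $\irn(V(x)+1)^- u_n^2\,dx$ no longer appears because here the quadratic form is positive definite and absorbed into $\|u_n\|^2$.
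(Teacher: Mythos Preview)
Your approach is exactly the paper's: it simply invokes the boundedness argument of Proposition~\ref{pscond} via Remark~\ref{rem}, and your write-up unpacks precisely those steps. One small correction to your ``delicate point'': the bound $|F_1'(s)s|\le C(s^2+|s|^p)$ fails near $s=0$ because $s^2|\log s^2|/s^2\to\infty$; the clean justification is that $u_n\in D(J)$ forces $F_1(u_n)\in L^1(\rn)$, and for $|s|<\delta$ one has $F_1'(s)s=2F_1(s)-s^2$, which gives $F_1'(u_n)u_n\in L^1(\rn)$ directly.
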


\begin{proof}
This follows from Proposition \ref{pscond} in view of Remark \ref{rem}. 
\end{proof}

We shall need a limiting problem
 \begin{equation}  \label{lim}
-\Delta u+V_{\infty}u =u \log u^{2}, \quad x\in \rn.
\end{equation}
The energy functional corresponding to it is  
\[
J_{\infty}(u) := \frac12\irn\left(|\nabla u|^2+(V_{\infty}+1)\right)dx - \frac12\irn u^2\log u^2\ dx. 
\]   
Let 
\[
\cn := \{ u\in D(J)\setminus\{0\}: \langle J'(u),u\rangle = 0\}
\]
be the Nehari manifold for $J$ and define the Nehari manifold $\cn_\infty$ for $J_\infty$ in the same way. By \cite[Theorem 1.2]{r12} there exists a solution $u_\infty>0$ for \eqref{lim} which minimizes $J_\infty$ on $\cn_\infty$ (a ground state solution). It is easy to see (cf.\ \cite{r12}) that if $u\in D(J)\setminus \{0\}$ and $\vp_u(s):=J(su)$, $s>0$, then 
\[
\vp_u'(s) = s\left(\|u\|^2 - \irn u^2(\log s^2+\log u^2+1)\,dx\right) = 0
\]
for a unique $s$, and this is the unique intersection point of the ray $\{su: s>0\}$ with $\cn$. Moreover, $\vp_u(s)\to-\infty$ as $s\to\infty$ and if $\|u\|=1$, then $s\mapsto \Phi(su)$ increases for $0<s<s_0$ ($s_0$ independent of $u$) and $s\mapsto \Psi(su)$ increases for all $s>0$ (by convexity). Hence $\cn$ is bounded away from the origin.

Let 
\[
\Gamma:=\{\alpha\in C([0, 1], X): \alpha(0)=0,\ J(\alpha(1))<0\}
\]
and 
\[
c := \inf_{\alpha\in\Gamma}\sup_{s\in[0,1]} J(\alpha(s)), \quad c_\cn := \inf_{u\in \cn} J(u).
\]
Clearly, $c\le c_\cn$, and since $F_2'(s)\le C|s|^{p-1}$ and $\Psi\ge 0$, it is easy to see that $J(u)$ is bounded away from 0 on a (small) sphere around the origin (cf.\ \cite[Lemma 2.15]{r12}). In particular, $c>0$ (and $c$ is the mountain pass level). 

\begin{lemma} \label{compar}
(i) If $V\not\equiv V_\infty$, then $c_\cn < c_\infty$, where $c_\infty := \inf_{u\in\cn_\infty}J_\infty(u)$. \\
(ii) If $J(u_n)\to d\in (0, c_\infty)$ and $J'(u_n)\to 0$, then $u_n\rh u\ne 0$ after passing to a sub\-sequence, $u$ is a critical point of $J$ and $J(u)\le d$.
\end{lemma}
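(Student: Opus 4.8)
The plan is to treat the two parts separately, using a scaling/comparison argument for (i) and a concentration-compactness dichotomy for (ii), with the limiting problem \eqref{lim} playing the role of the ``problem at infinity''.

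\emph{Part (i).} I would start from the ground state $u_\infty>0$ of $J_\infty$ on $\cn_\infty$, guaranteed by \cite[Theorem 1.2]{r12}. Since $V\le V_\infty$ everywhere and $V\not\equiv V_\infty$, for every $w\in D(J)\setminus\{0\}$ one has $\irn(V(x)+1)w^2\,dx\le\irn(V_\infty+1)w^2\,dx$, with strict inequality when $w$ does not vanish on the region where $V<V_\infty$ (in particular for $w=u_\infty$, which is everywhere positive). Hence $J(sw)<J_\infty(sw)$ for all $s>0$ when $w=u_\infty$, and more precisely $\vp_{u_\infty}(s)=J(su_\infty)$ is, for each fixed $s$, strictly below $J_\infty(su_\infty)$. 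Now project $u_\infty$ onto $\cn$: by the discussion preceding the lemma there is a unique $s_*>0$ with $s_*u_\infty\in\cn$, and
\[
c_\cn \le J(s_*u_\infty) = \max_{s>0} J(su_\infty) < \max_{s>0} J_\infty(su_\infty) = J_\infty(u_\infty) = c_\infty,
\]
where the middle equality uses that along a ray the maximum of $\vp_{u_\infty}$ is attained exactly at the Nehari point, the strict inequality uses $V\not\equiv V_\infty$ together with positivity of $u_\infty$ (so the maximizing point $s$ for $J(su_\infty)$ still gives a value strictly below the corresponding $J_\infty$-value, a fortiori below the max of $J_\infty$ along the ray), and the last equality is the characterization of $c_\infty$ via the ray for a ground state. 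This gives $c_\cn<c_\infty$.

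\emph{Part (ii).} Let $(u_n)$ be a Palais--Smale sequence at level $d\in(0,c_\infty)$. By Proposition \ref{boundedness} (i.e.\ Proposition \ref{pscond} via Remark \ref{rem}) the sequence is bounded in $H^1(\rn)$, so $u_n\rh u$ after passing to a subsequence; by Lemma \ref{properties}(iii), $u$ is a critical point of $J$, and by weak lower semicontinuity of $\Psi$ together with $2J(u_n)-\langle J'(u_n),u_n\rangle=\|u_n\|_2^2$ one checks $J(u)\le\liminf J(u_n)=d$ (arguing as in Lemma \ref{noncrit}: $\limsup\Psi(u_n)\le\Psi(u)$ from the PS inequality tested at $v=u$, hence $\Psi(u_n)\to\Psi(u)$, and $\|u\|^2\le\liminf\|u_n\|^2$). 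It remains to exclude $u=0$. Suppose $u=0$. Then $u_n\to 0$ in $L^p_{loc}$ for $p\in[2,2^*)$, and I would run a vanishing-versus-nonvanishing alternative for $\sup_{y\in\rn}\int_{B_1(y)}u_n^2\,dx$. If this quantity tends to $0$, a Lions-type lemma gives $u_n\to 0$ in $L^p(\rn)$ for $p\in(2,2^*)$; combined with $\langle J'(u_n),u_n\rangle\to0$, i.e.\ $\|u_n\|^2=\irn u_n^2\log u_n^2\,dx+o(1)$, and the fact that the logarithmic term is controlled on $\{|u_n|\ge1\}$ by $L^p$ norms and on $\{|u_n|<1\}$ by the logarithmic Sobolev inequality \eqref{logineq} (absorbing a small fraction of $\|\nabla u_n\|_2^2$ and using $\|u_n\|_2\to0$), one forces $\|u_n\|\to0$, hence $d=\lim J(u_n)=0$, contradicting $d>0$. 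If instead nonvanishing holds, there are $y_n\in\rn$ (necessarily $|y_n|\to\infty$, since locally $u_n\to0$) with $\int_{B_1(y_n)}u_n^2\ge\kappa>0$; set $\tilde u_n(x):=u_n(x+y_n)$. Using $V(x+y_n)\to V_\infty$ (from the asymptotic hypothesis) one shows $\tilde u_n\rh \tilde u\ne0$ and $\tilde u$ is a nontrivial critical point of $J_\infty$, so $J_\infty(\tilde u)\ge c_\infty$. Finally a Brezis--Lieb / splitting argument for both the norm term and the entropy term $\irn u^2\log u^2$ gives the energy inequality $d=\lim J(u_n)\ge J(0)+J_\infty(\tilde u)\ge c_\infty$ (the nonnegative ``remainder'' pieces are discarded), contradicting $d<c_\infty$. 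Hence $u\ne0$.

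\emph{Main obstacle.} The delicate point is the splitting of the nonlinear \emph{entropy} term $\irn u^2\log u^2\,dx$ under translation and weak convergence: unlike a pure power, $s\mapsto s^2\log s^2$ is sign-changing and not globally Lipschitz, so the usual Brezis--Lieb lemma does not apply verbatim. I would handle this by the $F_1,F_2$ decomposition already set up in Section \ref{prel}: for the $F_2$-part (which satisfies $|F_2'(s)|\le C_p|s|^{p-1}$) the standard Brezis--Lieb splitting works, while for the convex $F_1$-part one uses lower semicontinuity and the fact that $\irn F_1(u_n)\,dx=\irn F_1(u_n-\tilde u_n(\cdot-y_n))\,dx+\irn F_1(\tilde u(\cdot))\,dx+o(1)$ can be obtained from Fatou-type estimates together with the strong local convergence of the translates. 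Controlling the small-$|u|$ logarithmic singularity uniformly along the sequence — to make sure no mass escapes into the region $\{|u_n|<\delta\}$ — via \eqref{logineq} is the step I expect to require the most care; this is also essentially the place where the hypotheses $\lim_{|x|\to\infty}V(x)=\sup V=V_\infty$ and $\sigma(-\Delta+V+1)\subset(0,\infty)$ get used, the latter ensuring the $H^1$-norm is equivalent to $\|\cdot\|$ so that the logarithmic Sobolev estimate can be applied cleanly.
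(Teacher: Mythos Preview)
Your argument for part (i) is essentially identical to the paper's: project the positive ground state $u_\infty$ of $J_\infty$ onto $\cn$ and use the chain $c_\cn\le J(s_0u_\infty)<J_\infty(s_0u_\infty)\le J_\infty(u_\infty)=c_\infty$.

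For part (ii) your overall skeleton (boundedness, weak limit is a critical point, $J(u)\le d$ via the identity $J(u_n)-\tfrac12\langle J'(u_n),u_n\rangle=\tfrac12\|u_n\|_2^2$ and Fatou, then vanishing/nonvanishing dichotomy) also matches the paper. The substantive difference is in the nonvanishing branch. You propose a Brezis--Lieb type splitting of $\irn u^2\log u^2\,dx$ after translation, and you correctly flag this as the delicate step. The paper bypasses this entirely: since $u_n\to0$ in $L^2_{loc}$ and $V(x)\to V_\infty$ at infinity, one has $J(u_n)-J_\infty(u_n)\to0$ and $J'(u_n)-J'_\infty(u_n)\to0$, so $(u_n)$ is already a Palais--Smale sequence for $J_\infty$ at level $d$. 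By translation invariance of $J_\infty$, the shifted sequence $v_n(x)=u_n(x+y_n)$ is again a Palais--Smale sequence for $J_\infty$ at level $d$, with $v_n\rh v\ne0$. Now the \emph{same} Fatou identity you used for $J$ --- namely $J_\infty(v_n)-\tfrac12\langle J'_\infty(v_n),v_n\rangle=\tfrac12\|v_n\|_2^2$ --- gives $J_\infty(v)\le d<c_\infty$, a contradiction. No splitting of the entropy term is needed; the ``main obstacle'' you describe simply does not arise.

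One small remark on your $J(u)\le d$ step: the parenthetical argument ``as in Lemma \ref{noncrit}'' does not transfer cleanly, because there $u_n\to u$ strongly whereas here you only have $u_n\rh u$, so the term $\langle\Phi'(u_n),u-u_n\rangle$ need not vanish. Your primary argument via $\tfrac12\|u_n\|_2^2$ and Fatou is the right one (and is exactly what the paper does); the parenthetical should be dropped.
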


\begin{proof}
(i) Let $s_0>0$ be such that $s_0u_\infty\in \cn$ where $u_\infty>0$ is a ground state for \eqref{lim}. Since $V(x) < V_\infty$ in an open set, $u_\infty>0$ and $s\mapsto J_\infty(su_\infty)$, $s>0$, has a unique maximum at $s=1$,
\[
c_\cn \le J(s_0u_\infty) < J_\infty(s_0u_\infty) \le J_\infty(u_\infty) = c_\infty.
\]
(ii) By Proposition \ref{boundedness}, $u_n\rh u$ in $X$, $u_n(x)\to u(x)$ a.e.\ after passing to a subsequence and, according to (iii) of Lemma \ref{properties}, $u$ is a critical point of $J$. By Fatou's lemma,
\begin{align*}
d & = J(u_n) -\frac12\langle J'(u_n),u_n\rangle +o(1) = \frac12\irn u_n^2\,dx + o(1) \\
& \ge \frac12\irn u^2\,dx + o(1) = J(u) - \frac12\langle J'(u),u\rangle +o(1) = J(u)+o(1).
\end{align*}
So $J(u)\le d$ and it remains to show that $u\ne 0$. Arguing indirectly, suppose $u=0$. Since $u_n\to 0$ in $L^2_{loc}(\rn)$ and $V(x)\to V_\infty$ as $|x|\to\infty$, 
\[
J(u_n)-J_\infty(u_n) = \frac12\irn(V(x)-V_\infty)u_n^2\,dx \to 0
\]
and therefore $J_\infty(u_n)\to d$. 
Using the H\"older and the Sobolev inequalities and taking $v$ with $\|v\|=1$, we obtain 
\begin{align*}
\left|\langle J'(u_n)-J'_\infty(u_n), v\rangle \right| & \le \irn (V_\infty-V(x))|u_n|\,|v|\,dx \\
& \le C\left(\irn(V_\infty-V(x))u_n^2\,dx\right)^{1/2}.
\end{align*}
As the right-hand side tends to 0 uniformly in $\|v\|=1$, $J'(u_n)-J'_\infty(u_n)\to 0$ and hence $J'_\infty(u_n)\to 0$. Taking $p\in(2,2^*)$, we have
\begin{equation*}
 o(1) = \langle J'(u_n),u_n\rangle \geq \|u_n\|^2-C_1\int_{\{u_n^2\geq 1/e\}}  |u_n|^{p}\,dx,
 \end{equation*} 
and if $\|u_n\|_p\to 0$, then $u_n\to 0$ in $X$. By \eqref{ps} with $v=0$, 
\[
\langle \Phi'(u_n),-u_n\rangle -\Psi(u_n) \ge -\eps_n\|u_n\|
\]
which implies $\Psi(u_n)\to 0$. So $J(u_n)\to 0$ contrary to the assumption that $J(u_n)\to d>0$. It follows that $\|u_n\|_p\not\to 0$ and  
hence by Lions' lemma \cite[Lemma I.1]{li}, \cite[Lemma 1.21]{r16} there are $(y_n)\subset\rn$ and $\delta>0$ such that for large $n$,
\[
\int_{B_1(y_n)}u_n^2\,dx \ge \delta. 
\]
Let $v_n(x) := u_n(x+y_n)$. Since $J_\infty$ is invariant with respect to translations by elements of $\rn$, $J_\infty(v_n)\to d$ and $J'_\infty(v_n) \to 0$. Moreover,
\[
\int_{B_1(0)}v_n^2\,dx = \int_{B_1(y_n)}u_n^2\,dx \ge \delta 
\]
and therefore $v_n\rh v\ne 0$ after passing to a subsequence. So $v$ is a nontrivial critical point of $J_\infty$ and $J_\infty(v)\le d < J_\infty(u_\infty)$ which is the desired contradiction.
\end{proof}

\begin{proof}[Proof of Theorem \ref{thm2}]
If $V\equiv V_\infty$, then $u_\infty$ is a solution we are looking for. So assume $V(x)<V_\infty$ for some $x$.

Suppose there exists $\eps_0\in (0,c/2)$ such that  there are no Palais-Smale sequences in $J_{c-2\eps_0}^{c+2\eps_0}$ and let $\eps\in(0,\eps_0)$. Choose $\alpha\in\Gamma$ so that $\alpha([0,1])\subset J^{c+\eps/2}$. We may assume $J(\alpha(1))<-\eps/2$. Let $\chi_{R}\in C^1(\rn, [0, 1])$ be such that $\chi_{R}(x)=1$ if $\vert x\vert\leq R$, $\chi_{R}(x)=0$ if $\vert x\vert\geq 2R$ and $\vert \nabla \chi_{R}\vert\leq 1$.
Let $u_{R}(x):=\chi_{R}(x)u(x)$. It is easy to see that $\Vert u_{R}-u\Vert\rightarrow 0$ uniformly in $u\in \alpha([0,1])$ as $R\rightarrow +\infty$. Since $\Phi\in C^{1}(X, \r)$, there exists $R>0$ such that $\Phi(u_R)\le \Phi(u)+\eps/2$ for all $u\in \alpha([0,1])$. Moreover, as $F_1$ is convex and $|u_R|\le |u|$, $\Psi(u_R)\le \Psi(u)$. So it follows that $\alpha_R([0,1])\subset J^{c+\eps}$ and $J(\alpha_R(1))<0$, where $\alpha_R(s) := \chi_R\alpha(s)$. Clearly, $\alpha_R$ has compact support and $\alpha_R(0)=0$. Hence $\alpha_R\in\Gamma$.
Using Proposition \ref{deformation} and Remark \ref{remdef} we set $\beta_R(s) := \eta(T,\alpha_R(s))$ and obtain $\beta_R\in \Gamma$, $\beta_R([0,1])\subset J^{c-\eps}$, a contradiction to the definition of $c$. Since $\eps_0$ may be chosen arbitrarily small, there exists a sequence $(u_n)$ such that $J'(u_n)\to 0$ and $J(u_n)\to c$. Using Lemma \ref{compar} we obtain a critical point $u\ne0$ of $J$ such that $J(u)\le c$. So $u\in \cn$. Hence $c=c_\cn$ and $u$ is a ground state solution (and so is $-u$). As in \cite{r12}, we first see that $\pm u$ cannot change sign and then that either $u$ or $-u$ is strictly positive. This completes the proof. 
\end{proof}

\section{Extension to the $p$-Laplacian} \label{plapl}

For the equation
 \begin{equation}  \label{2}
-\text{div}(\vert \nabla u\vert^{p-2}\nabla u)+V(x)\vert u\vert^{p-2}u =\vert u\vert^{p-2} u \log \vert u\vert^{p}, \quad x\in \rn,
\end{equation}
where $1<p<N$, results similar to Theorems \ref{thm1} and \ref{thm2} hold. The functional corresponding to \eqref{2} is
\begin{equation} \label{p-fcl} 
J(u):= \frac{1}{p}\irn\left(\vert \nabla u\vert^{p}+(V(x)+1)|u|^{p}\right)dx-\frac{1}{p}\irn |u|^{p}\log |u|^{p}\,dx.
\end{equation}

\begin{theorem} \label{thm3}
If $V\in C(\rn,\r)$ and $\lim_{|x|\to\infty}V(x)=\infty$, then equation \eqref{2} has infi\-nitely many solutions $\pm u_n$ such that $J(\pm u_n)\to\infty$.
\end{theorem}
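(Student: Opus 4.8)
The plan is to follow the proof of Theorem~\ref{thm1} in Section~\ref{pf1} step by step, carrying out the routine but not entirely trivial adaptation from $-\Delta$ to the $p$-Laplacian. One works in the uniformly convex Banach space
\[
X := \{u\in W^{1,p}(\rn): \irn(|\nabla u|^p+(V(x)+1)^+|u|^p)\,dx < \infty\}, \qquad \|u\|^p := \irn\bigl(|\nabla u|^p+(V(x)+1)^+|u|^p\bigr)\,dx;
\]
since $V$ is continuous with $V(x)\to\infty$, the weight $(V+1)^-$ is bounded with compact support, and coercivity of $V$ gives, as for $p=2$, that $X$ is compactly embedded in $L^q(\rn)$ for all $p\le q<p^*:=Np/(N-p)$. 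The nonsmooth framework of Section~\ref{prel} is reproduced with $|s|^p$ in place of $s^2$: one defines $F_1,F_2$ by the formulas there so that for small $\delta>0$, $F_1$ is convex, $F_1,F_2\in C^1(\r,\r)$, $F_2\equiv 0$ on $(-\delta,\delta)$ and $|F_2'(s)|\le C_q|s|^{q-1}$ for some $q\in(p,p^*)$, and then $J=\Phi+\Psi$ with $\Phi(u)=\frac1p\|u\|^p-\frac1p\irn(V(x)+1)^-|u|^p\,dx-\irn F_2(u)\,dx\in C^1(X,\r)$ and $\Psi(u)=\irn F_1(u)\,dx$ convex, lower semicontinuous and $\ge 0$. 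The analogues of Lemma~\ref{bdd} and Proposition~\ref{subdiff}, now with $-\text{div}(|\nabla u|^{p-2}\nabla u)$ and $|u|^{p-2}u$ in place of $-\Delta$ and $u$, hold with the same proofs, so the critical point theory of \cite{r13} applies. The one new analytic input is the $L^p$ logarithmic Sobolev inequality, which gives, for each small $a>0$,
\[
\irn|u|^p\log|u|^p\,dx \le a\|\nabla u\|_p^p + C_a\bigl(\log\|u\|_p^p+1\bigr)\|u\|_p^p \qquad\text{for all } u\in W^{1,p}(\rn),
\]
playing the role of \eqref{logineq} and \eqref{eqb}.

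Next one verifies the Palais--Smale condition as in Proposition~\ref{pscond}. If $J(u_n)\le d$ and $J'(u_n)\to 0$, then $\|u_n\|_p^p=pJ(u_n)-\langle J'(u_n),u_n\rangle\le pd+o(1)\|u_n\|$; feeding this and the $L^p$ log-Sobolev inequality (with $a$ small) into $pJ(u_n)=\|u_n\|^p-\irn(V+1)^-|u_n|^p\,dx-\irn|u_n|^p\log|u_n|^p\,dx$ yields $pd\ge\frac12\|u_n\|^p-C(1+\|u_n\|^r)$ for some $r\in(1,p)$, so $(u_n)$ is bounded. Passing to a subsequence, $u_n\rh u$ in $X$ and $u_n\to u$ in $L^q(\rn)$ for $p\le q<p^*$. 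Writing $\langle\mathcal{A}(u),w\rangle:=\irn\bigl(|\nabla u|^{p-2}\nabla u\cdot\nabla w+(V+1)^+|u|^{p-2}uw\bigr)\,dx$, the subdifferential of $u\mapsto\frac1p\|u\|^p$, and taking $v=u$ in \eqref{ps},
\begin{multline*}
\langle\mathcal{A}(u_n),u-u_n\rangle-\irn(V+1)^-|u_n|^{p-2}u_n(u-u_n)\,dx \\
-\irn F_2'(u_n)(u-u_n)\,dx+\Psi(u)-\Psi(u_n)\ge-\eps_n\|u-u_n\|.
\end{multline*}
The second and third integrals tend to $0$ (compact support, respectively subcritical growth of $F_2'$, together with $L^q$-convergence), $\eps_n\|u-u_n\|\to 0$, and lower semicontinuity of $\Psi$ forces $\limsup_n(\Psi(u)-\Psi(u_n))\le 0$, so $\limsup_n\langle\mathcal{A}(u_n),u_n-u\rangle\le 0$. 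Since $\langle\mathcal{A}(u),u_n-u\rangle\to 0$ and $\mathcal{A}$ is monotone, $\langle\mathcal{A}(u_n)-\mathcal{A}(u),u_n-u\rangle\to 0$, whence $u_n\to u$ in $X$ by the $(S_+)$ property of this $p$-Laplacian-type operator on the uniformly convex space $X$. I expect this to be the main obstacle: in the Hilbert setting one argues directly with $\|u_n\|^2-\|u\|^2$ and weak lower semicontinuity of the norm, whereas here one must pass through monotonicity and the $(S_+)$ property, taking the usual extra care when $1<p<2$.

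The deformation machinery transfers essentially unchanged. The pseudo-gradient field of Proposition~\ref{pseudogradient} is built in \cite{r12} from Banach-space ingredients only: near each $u_j$ one selects, by Hahn--Banach, a unit vector $h_j\in X$ with $\langle J'(u_j),h_j\rangle>\frac12\|J'(u_j)\|$, patches with a locally finite partition of unity subordinate to a covering $(W_j)$ of small diameter, takes $u_j\in C_0^\infty(\rn)$ to secure locally compact support, and sets $h_{-j}=-h_j$ for oddness. Consequently Proposition~\ref{pseudogradient}, Corollary~\ref{pseudogradientcoroll}, Lemma~\ref{noncrit} and the Deformation Proposition~\ref{deformation} hold as stated, as does the differentiability of $t\mapsto J(\eta(t,u))$ along the flow \eqref{flow}, which rests only on the subdifferential chain rule valid in Banach spaces.

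It remains to redo the Fountain estimates of Proposition~\ref{fountain} and conclude. Choose subspaces $X_k\subset C_0^\infty(\rn)$ with $\dim X_k=k$ and $X=\ol{\cup_{k=1}^\infty X_k}$, put $Z_k:=X_k^\bot$ and $\beta_k:=\sup\{\|u\|_p:u\in Z_{k-1},\ \|u\|=1\}$; compactness of $X\hookrightarrow L^p(\rn)$ gives $\beta_k\to 0$ exactly as in the proof of Proposition~\ref{fountain}. For $u=sw$ with $w\in X_k$ and $\|w\|_p=1$,
\[
J(sw)=\frac{s^p}{p}\left(\irn\bigl(|\nabla w|^p+(V(x)+1)|w|^p\bigr)\,dx-\log s^p-\irn|w|^p\log|w|^p\,dx\right),
\]
which tends to $-\infty$ uniformly in $w$ as $s\to\infty$ (all integrals being uniformly bounded on $\{w\in X_k:\|w\|_p=1\}$), so $a_k:=\max\{J(u):u\in X_k,\ \|u\|=\rho_k\}\le 0$ for $\rho_k$ large. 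On $Z_{k-1}$ with $\|u\|=r_k:=1/\beta_k$ one has $\|u\|_p\le\beta_k\|u\|=1$, hence $\log\|u\|_p^p\le 0$, and combining the formula for $J$ with the $L^p$ log-Sobolev inequality (with $a$ so small that $a\|\nabla u\|_p^p\le\frac12\|u\|^p$) and the boundedness and compact support of $(V+1)^-$ gives $J(u)\ge\frac1{2p}\|u\|^p-C\|u\|_p^p\ge\frac1{2p}\beta_k^{-p}-C\to\infty$, so $b_k:=\inf\{J(u):u\in Z_{k-1},\ \|u\|=r_k\}\to\infty$; and $\rho_k>r_k$ may be assumed. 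Finally, with the topological intersection Lemma~\ref{intersection} and $\Gamma_k$, $d_k$ defined exactly as in the proof of Theorem~\ref{thm1} (in particular with $\gamma(B_k)$ required to have compact support), one gets $d_k\ge b_k\to\infty$, and if $K_{d_k}=\emptyset$ then, as in that proof, the Deformation Proposition~\ref{deformation} produces $\beta\in\Gamma_k$ with $\beta(B_k)\subset J^{d_k-\eps}$, contradicting the definition of $d_k$. Hence $K_{d_k}\ne\emptyset$ for all large $k$, which yields solutions $\pm u_n$ of \eqref{2} with $J(\pm u_n)\to\infty$.
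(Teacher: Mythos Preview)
Your outline follows the paper's own route and most of it is sound, but there is one genuine gap that the paper explicitly flags as the point ``requiring some care'': when $p\neq 2$ the space $X$ in \eqref{space2} is a (uniformly convex) Banach space, not a Hilbert space, so the definition $Z_k:=X_k^\perp$ is meaningless, and the argument that $\beta_k\to 0$ ``exactly as in the proof of Proposition~\ref{fountain}'' does not go through --- that proof uses Hilbert orthogonality to force $u_k\rh 0$. What the paper does instead is to take a Schauder basis $(e_k)$ of $W^{1,p}(\rn)$, chosen (after a small perturbation, cf.\ \cite[Proposition 1.a.9]{lt}) in $C_0^\infty(\rn)$, with biorthogonal functionals $(e_k^*)\subset X^*$; one then sets $X_k:=\mathrm{span}\{e_1,\ldots,e_k\}$ and $Z_k:=\overline{\mathrm{span}}\{e_{k+1},e_{k+2},\ldots\}$. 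The conclusion $\beta_k\to 0$ follows because the $e_k^*$ are total: if $u_k\in Z_{k-1}$, $\|u_k\|=1$, and $u_k\rh w$, then $\langle e_j^*,w\rangle=0$ for every $j$, so $w=0$, and compactness of $X\hookrightarrow L^p(\rn)$ finishes the job. The intersection Lemma~\ref{intersection} likewise needs its Banach-space version (decomposition $X=X_k\oplus Z_k$ via the basis projections rather than orthogonality), for which the paper refers to \cite[Proposition~4.6]{sw}.

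Apart from this, your proposal matches the paper. In fact, your handling of the convergence step in the Palais--Smale argument --- via $\limsup_n\langle\mathcal A(u_n),u_n-u\rangle\le 0$ and the $(S_+)$ property --- is cleaner than the paper's terse ``this gives \eqref{a2} with exponent $2$ replaced by $p$'': for $p\neq 2$ the identity $\langle u_n,u-u_n\rangle=\|u\|^2-\|u_n\|^2+o(1)$ has no direct analogue, and one must indeed pass either through $(S_+)$ as you do, or through H\"older ($\langle\mathcal A(u_n),u\rangle\le\|u_n\|^{p-1}\|u\|$) together with uniform convexity of $X$. So the step you singled out as ``the main obstacle'' is fine; the real obstacle you missed is the construction of the Fountain decomposition $X_k$, $Z_k$ in a non-Hilbert setting.
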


\begin{theorem} \label{thm4}
If $V\in C(\rn,\r)$, $\lim_{|x|\to\infty}V(x) = \sup_{x\in\rn}V(x) := V_\infty \in (-1,\infty)$ and 
\begin{equation} \label{assu}
\inf\left\{\irn\left(\vert \nabla u\vert^{p}+(V(x)+1)|u|^{p}\right)dx: \|u\|_p=1\right\} > 0,
\end{equation}
then equation \eqref{1} has a ground state solution $u>0$. 
\end{theorem}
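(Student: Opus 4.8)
The plan is to follow, essentially line by line, the proof of Theorem~\ref{thm2}, replacing the Hilbert space $H^1(\rn)$ by the Banach space $W^{1,p}(\rn)$ equipped with the norm $\|u\|^p := \irn(|\nabla u|^p+(V(x)+1)|u|^p)\,dx$, which is equivalent to the usual one by assumption \eqref{assu}. First I would rebuild the framework of Section~\ref{prel}: define truncations $F_1,F_2$ of $-\frac1p|s|^p\log|s|^p$ so that $F_1$ is convex and $C^1$, $F_2\in C^1$ with $|F_2'(s)|\le C|s|^{q-1}$ for some $q\in(p,p^*)$ (here $p^*:=Np/(N-p)$), set $\Phi(u):=\frac1p\|u\|^p-\irn F_2(u)\,dx\in C^1$ and $\Psi(u):=\irn F_1(u)\,dx\ge 0$ convex and lower semicontinuous, so that $J=\Phi+\Psi$ and the abstract critical point theory of \cite{r13} applies verbatim. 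The analogues of Lemma~\ref{bdd}, Proposition~\ref{subdiff}, Lemma~\ref{properties}, Proposition~\ref{pseudogradient}, Corollary~\ref{pseudogradientcoroll} and of the deformation results (Proposition~\ref{deformation}, Remark~\ref{remdef}) carry over with only notational changes, using the bound $|s\log|s|^p|\le C_q(1+|s|^{q-1})$ in place of the one for $s\log s^2$.

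The compactness input is the point that needs genuine adaptation. For boundedness of Palais--Smale sequences I would invoke a $p$-version of the logarithmic Sobolev inequality \eqref{logineq}, namely that for every $a>0$
\[
\irn|u|^p\log|u|^p\,dx\le a\|\nabla u\|_p^p+C(a)\bigl(1+\log\|u\|_p^p\bigr)\|u\|_p^p,\qquad u\in W^{1,p}(\rn),
\]
combined with the identity $\|u_n\|_p^p=pJ(u_n)-\langle J'(u_n),u_n\rangle\le pd+o(1)\|u_n\|$; choosing $a$ so small that $a\|\nabla u\|_p^p\le\frac12\|u\|^p$ yields $pd\ge\frac12\|u_n\|^p-C(1+\|u_n\|^r)$ with $r<p$, exactly as in Remark~\ref{rem}. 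The step that requires real work is the $p$-Laplacian analogue of Lemma~\ref{properties}(iii): if $J(u_n)$ is bounded above, $J'(u_n)\to 0$ and $u_n\rh u$, then $u$ is a critical point. Weak convergence alone does not allow one to pass to the limit in the principal part $-\mathrm{div}(|\nabla u_n|^{p-2}\nabla u_n)$; here I would test the Palais--Smale inequality \eqref{ps} against $(u_n-u)\vp$ for cut-offs $\vp\in C_0^\infty(\rn)$ and use the elementary monotonicity inequality $(|\xi|^{p-2}\xi-|\zeta|^{p-2}\zeta)\cdot(\xi-\zeta)\ge 0$, in the spirit of the Boccardo--Murat argument, to deduce $\nabla u_n\to\nabla u$ in $L^p_{loc}(\rn)$ and hence a.e.; one may then take limits in $\langle J'(u_n),v\rangle$ for $v\in C_0^\infty(\rn)$ as in the proof of Lemma~\ref{properties}. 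This is the place I expect to be the main obstacle.

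With compactness settled, the rest is structurally identical to Section~\ref{pf2}. One introduces the limiting problem with $V$ replaced by $V_\infty$, its functional $J_\infty$ and Nehari manifold $\cn_\infty$, and invokes the $p$-Laplacian counterpart of \cite[Theorem~1.2]{r12} to obtain a positive ground state $u_\infty$ for it. The fibering map $\vp_u(s):=J(su)$ again has a unique positive critical point, the unique intersection of $\{su:s>0\}$ with $\cn$; moreover $\vp_u(s)\to-\infty$ as $s\to\infty$, $s\mapsto\Psi(su)$ is nondecreasing by convexity of $F_1$, and $J$ is bounded away from $0$ on a small sphere, so $0<c\le c_\cn$, where $c$ is the mountain pass level and $c_\cn:=\inf_\cn J$. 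The comparison lemma goes through unchanged: if $V\not\equiv V_\infty$, then choosing $s_0$ with $s_0u_\infty\in\cn$ and using that $V<V_\infty$ on an open set and $u_\infty>0$ gives $c_\cn\le J(s_0u_\infty)<J_\infty(s_0u_\infty)\le J_\infty(u_\infty)=c_\infty$, hence $c<c_\infty$.

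Finally, for the ground state itself: if $V\equiv V_\infty$ take $u=u_\infty$; otherwise, assuming there is no Palais--Smale sequence at level $c$, truncate a near-optimal mountain-pass path $\alpha$ by cut-offs $\chi_R$ — this keeps it inside $J^{c+\eps}$ since $\Phi\in C^1$ and $\Psi(\chi_R u)\le\Psi(u)$ by convexity of $F_1$ and $|\chi_R u|\le|u|$ — so that it acquires compact support, then deform it below level $c-\eps$ via Proposition~\ref{deformation} (applicable by Remark~\ref{remdef}), contradicting the definition of $c$. Thus there is a Palais--Smale sequence at level $c<c_\infty$, and the $p$-Laplacian version of Lemma~\ref{compar}(ii) produces a nontrivial critical point $u$ with $J(u)\le c$; then $u\in\cn$, so $c=c_\cn$ and $u$ is a ground state. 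As in \cite{r12}, since $|\nabla|u||=|\nabla u|$ a.e.\ one has $J(|u|)=J(u)$ and $|u|\in\cn$, and a standard argument on the Nehari manifold then shows that $\pm u$ cannot change sign; the strong maximum principle (Harnack inequality) for the $p$-Laplacian finally gives $u>0$ after possibly replacing $u$ by $-u$.
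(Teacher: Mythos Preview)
Your proposal is correct and follows essentially the paper's own route: adapt the framework of Section~\ref{prel} and the proof of Theorem~\ref{thm2} with the obvious $p$-substitutions, use a $p$-logarithmic Sobolev inequality for boundedness, compare with the limiting problem, and run the mountain-pass/deformation argument with compactly supported paths. Two small deviations are worth noting. First, the quantity $\bigl(\irn(|\nabla u|^p+(V(x)+1)|u|^p)\,dx\bigr)^{1/p}$ is positive under \eqref{assu} but is not a norm when $V+1$ takes negative values (for $p\ne 2$ Minkowski's inequality is not available); the paper uses $(V(x)+1)^+$ in the norm \eqref{space2} and treats the negative part as a weakly continuous perturbation, and it flags this point explicitly. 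Second, for strict positivity the paper does not rely on a generic Harnack/strong maximum principle but applies \cite[Theorem~5.3.1]{ps} with the comparison nonlinearity $f(u)=u^{p-1}(C-\log u^p)$, verifying that $F(u)^{-1/p}$ is not integrable at $0$; your formulation is vaguer but points at the same mechanism. Conversely, you are right to single out the passage to the limit in the principal part $|\nabla u_n|^{p-2}\nabla u_n$ in the analogue of Lemma~\ref{properties}(iii): the paper glosses over this, and your Boccardo--Murat style argument (testing against $(u_n-u)\vp$ and using monotonicity to get $\nabla u_n\to\nabla u$ a.e.\ locally) is the appropriate justification.
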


Since the proofs are similar to those of Theorems \ref{thm1} and \ref{thm2}, we only point out the main differences. 

In Theorem \ref{thm3} we shall work in the space 
\begin{equation} \label{space2}
X := \{u\in W^{1,p}(\rn): \|u\|^p := \irn(|\nabla u|^p+(V(x)+1)^+|u|^p)\,dx < \infty\}.
\end{equation}
It is well known that the embedding $X\hookrightarrow L^p(\rn)$ (and hence $X\hookrightarrow L^q(\rn)$ for all $p\le q< p^* := Np/(N-p)$) is compact. Since we could not find any convenient reference to this result, we include a brief argument. Let $(u_n)$ be bounded and let $\eps>0$ be given. Passing to a subsequence, $u_n\rh u$ and $\|u_n-u\|^p \le C$ for some $C$. Choose $R>0$ so that $V(x)+1 \ge C/\eps$ if $|x|>R$. Then
\[
\int_{|x|>R}|u_n-u|^p\,dx \le \frac{\eps}C\irn(V(x)+1)^+|u_n-u|^p\,dx \le \frac{\eps}C\|u_n-u\|^p \le \eps
\]
and since $u_n\to u$ in $L^p_{loc}(\rn)$, the conclusion follows. 

The results of Section \ref{prel}, including the definition of the functionals $\Phi$ and $\Psi$, remain valid after making obvious changes (in particular,  in the definition of $F_1$, $-\frac12s^2\log s^2$ should be replaced by $-\frac1p|s|^p\log|s|^p$ for $|s|<\delta$).  In the proof of the boundedness part of Proposition \ref{pscond}  inequality \eqref{logineq} must be replaced by a $p$-logarithmic Sobolev inequality. A suitable version for our purposes is
\[
\irn|u|^p\log|u|^p\,dx \le \frac Np\log(C_p\|\nabla u\|_p^p),
\]
see the formula in the middle of p.\ 153 in \cite{r8}. In the convergence part of the proof of Proposition \ref{pscond}, a formula corresponding to \eqref{a1} is
\begin{align*}
& \irn \left(|\nabla u_n|^{p-2}\nabla u_n\cdot \nabla(u-u_n)+(V(x)+1)^+|u_n|^{p-2}u_n(u-u_n)\right)dx  \\
& \quad - \irn(V(x)+1)^-|u_n|^{p-2}u_n(u-u_n)\,dx - \irn F_2'(u_n)(u-u_n)\,dx \\
& \quad + \Psi(u)-\Psi(u_n)
\ge -\eps_n\|u-u_n\|, 
\end{align*} 
and this gives \eqref{a2} with exponent 2 replaced by $p$.
In Lemma \ref{noncrit} and Proposition \ref{deformation} only minor changes are needed. On the other hand, the construction of  $(X_k)$ and $(Z_k)$ requires some care. It is known that the space $W^{1,p}(\rn)$ has a Schauder basis $(e_k)_{k=1}^\infty$, see e.g.\ \cite[Section 2.5.5, Remark 2]{tr}. By \cite[Proposition 1.a.9]{lt}, this basis may be chosen so that $e_k\in C_0^\infty(\rn)$ for all $k$. There exists a set of biorthogonal functionals $(e_k^*)_{k=1}^\infty\subset X^*$, i.e. functionals such that $\langle e_m^*,e_k\rangle = \delta_{km}$ \cite[Section 1.b]{lt}. It is then easy to see that $e_k^*$ are total, i.e.\ $\langle e_k^*,u\rangle = 0$ for all $k$ implies $u=0$ (cf.\ \cite[Section 1.f]{lt}). Now we take $X_k := \text{span}\{e_1,\ldots,e_k\}$ and $Z_k := \text{cl\,span}\{e_{k+1},e_{k+2},\ldots\}$, where cl denotes the closure, and we define $a_k, b_k, B_k, N_k$ as previously. Lemma \ref{intersection} still holds, essentially with the same proof as in \cite{r16}, see \cite[Proposition 4.6]{sw}. In the proof of Proposition \ref{fountain} the exponent 2 should be replaced by $p$ and $\beta_k^p\|u\|^p$ by $\beta_k^q\|u\|^q$, where $q\in(p,p^*)$. That $\beta_k\to 0$ follows from the fact that if $u_k\in Z_{k-1}$ and $u_k\rh u$, then $u$ must be 0 because $e_k^*$ are total. The remaining part of the proof is unchanged. 

In Theorem \ref{thm4} we work in the space $X = W^{1,p}(\rn)$ with the same norm as in \eqref{space2}.
Since $V_\infty+1>0$, it is equivalent to the usual norm. Note however that if $V(x)+1<0$ for some $x$, then the integral in \eqref{space2} with $(V(x)+1)^+$ replaced by $V(x)+1$ does not define a norm. The rest of the argument is essentially the same. Assumption \eqref{assu} implies that each ray intersects the Nehari manifold at a unique point and since $ u\mapsto\irn(V(x)+1)^-|u|^p\,dx$ is weakly continuous, one easily verifies that $J$ has the mountain pass geometry. Finally, it follows from \cite[Theorem 5.3.1]{ps} that if $u\ge 0$ is a nontrivial solution, then $u>0$. More precisely, if we take $f(u)=u^{p-1}(C-\log u^p)$ with $C>0$ large enough, then $f$ is increasing on $(0,\eps)$ for some $\eps>0$ and $f(u) \ge u^{p-1}(V(x)-\log u^p)$ for all $u > 0$. Let $F(u) := \int_0^u f(s)\,ds$. Since $F(u)^{-1/p}$ is not integrable at $u=0$, all assumptions of (the first part of) the above-mentioned theorem are satisfied and hence $u>0$.

\end{document}